\documentclass[preprint,12pt]{elsarticle}

\usepackage{booktabs}   
\usepackage{multirow}   
\usepackage{amsmath,amsthm,amssymb}
\usepackage{graphicx}
\usepackage{booktabs}
\usepackage{hyperref}
\usepackage{cleveref}
\usepackage{algorithm}
\usepackage{algpseudocode}
\usepackage{subcaption}
\usepackage{enumitem}
\usepackage{lineno}
\newtheorem{assumption}{Assumption}
\newtheorem{theorem}{Theorem}
\newtheorem{lemma}{Lemma}
\newtheorem{corollary}{Corollary}
\newtheorem{proposition}{Proposition}

\theoremstyle{definition}
\newtheorem{definition}{Definition}

\theoremstyle{remark}
\newtheorem{remark}{Remark}

\journal{Elsevier}

\begin{document}

\begin{frontmatter}

\title{GK-Mapper: A Stability Framework for Gustafson-Kessel Fuzzy Mapper Graphs}

\author[ismd]{Annesha Sen}
\ead{23dr0026@iitism.ac.in}

\author[ismd]{Shivam Singh}
\ead{24dr0172@iitism.ac.in }

\author[ismd]{S.~P.~Tiwari\corref{cor1}}
\ead{sptiwari@iitism.ac.in}

\cortext[cor1]{Corresponding author.}

\address[ismd]{Department of Mathematics \& Computing,
               Indian Institute of Technology (ISM),
               Dhanbad-826004, India}

\begin{abstract}
Topological Data Analysis is the  field that uses algebraic topology for data analysis, with the Mapper Algorithm that studies the structure of data after reducing the dimension of the dataset. There are several variants of Mapper, like Conventional Mapper, F-Mapper, and Shape Fuzzy-C Means Mapper. In this article, we extend the idea of the Shape Fuzzy-C Means Mapper graphs by introducing the  Gustafson-Kessel Fuzzy Mapper Graphs algorithm, which replaces the spherical covers by ellipsoidal cover, which is useful for high dimensional datasets because real world datasets are not always symmetrical or spherical. We then develop the stability framework for the graphs
produced by Gustafson Kessel Mapper graph and Shape Fuzzy $C$-Mean Mapper graph.
We prove that the memberships depend smoothly on the fuzzifier,
establish a precise condition for the existence of edges, and
show that the graph is locally stable under small perturbations.
We describe the critical event structure of graph changes in
terms of threshold crossings of the membership functions and
show that the graph is constant between consecutive critical
events. When the threshold-crossing set is finite, this yields
an eventual freezing threshold.
Finally, we show empirically that the Gustafson Kessel Mapper is more
stable than the Shape Fuzzy C Means for high-dimensional complex datasets.
\end{abstract}

\begin{keyword}
Gustafson Kessel Mapper \sep
Gustafson Kessel FCM \sep
Fuzzy Clustering \sep
Mapper Algorithm \sep
Topological Data Analysis \sep
Stability Analysis \sep
Fuzzifier Parameter \sep
Simplicial Complex \sep
\end{keyword}

\end{frontmatter}

\section{Introduction}
\label{sec:intro}

The Mapper algorithm~\cite{singh2007} has become one of the tools in Topological Data Analysis for
analysing the shape of complex, high-dimensional data. It
transforms datasets into a graph, which is a 2D/3D representation of the datasets. It provides the summaries
of connectivity, loops, and hidden geometric relationships that
sometimes ML methods miss~\cite {chazal2021,carriere2017}.
It has been applied in various fields like bioinformatics, neuroscience,
social networks, and many more. Some applications
can be found in~\cite{hasegan2024,li2015,madukpe2025,nicolau2011,
rafique2020,zhou2021}.
Since it has so many applications, selecting its appropriate parameters is still a challenging task.
Particularly, the choice of filter function, cover resolution,
and overlap parameters highly influences the resulting
graph.
Several works have addressed this issue by studying stability
conditions and robustness properties of Mapper
graphs~\cite{ben2006,carriere2017,carriere2018,dey2016}.

The conventional Mapper~\cite{singh2007} uses hard partitioning and rigid interval covers, but for real-world datasets, boundaries cannot always be rigid, and so
 Fuzzy variants of Mapper have been proposed.
The F-Mapper algorithm~\cite{bui2020fmapper} uses Fuzzy $C$-Mean
(FCM) to generate overlapping covers. It creates soft cover rather than hard ones, which becomes more useful for real-world datasets. But it also contains the same number of parameters; the filter function, the fcm cover that requires the number of clusters and membership factor and the dbscan algorithm. To limit these parameters, The Shape Fuzzy $C$-Mean (SFCM) algorithm~\cite{bui2021sfcm}
combines FCM directly with the Mapper nerve construction. Thus, in this method, we only need the number of clusters and the threshold condition.

Although it solves a lot of our purpose, it again has two major limitations:
The first one is that it uses FCM to generate cover, which  assumes
spherical cluster geometry.
In many real world datasets, biological structures, and medical image clusters are non-spherical, and the Euclidean cover
misrepresents the true cluster boundaries.
Second, the stability of the SFCM graph with respect to the
fuzzifier parameter $m$ has not been studied.
It is typically considered as $m = 2$ without theoretical
justification~\cite{bezdek1981}, and without knowing whether small changes in $m$ can affect the graph structure.

We address both limitations.
We propose the \emph{Gustafson-Kessel Fuzzy Mapper Graphs} (GK Mapper)
algorithm, which replaces the Euclidean FCM cover of SFCM with
a cover generated by the Gustafson-Kessel FCM (GK-FCM)
algorithm~\cite{gustafson1979,bezdek1981}, which considers an ellipsoidal structure for the cluster.
We then develop a stability framework for GK Mapper and empirically show that GK-Mapper often performs well across several aspects for which theoretical foundations are developed in this paper. The
contributions of the paper are as follows:

\begin{enumerate}
    \item We propose the GK-Mapper algorithm, which modifies the cover construction of the  SFCM algorithm with the Gustafson-Kessel-based cover.

   \item We characterise the edgeless-zone boundary by the critical threshold
$t_{\mathrm{crit}}(m)=\max_i\max_{j\neq k}\min\{u_{ij}(m),u_{ik}(m)\}$,
above which the graph becomes edgeless.

  \item We then prove a local structural stability theorem with a computable stability radius $r^*$. This radius indicates how far one can vary a chosen value of $m$, obtaining the same Mapper graph.

    \item We show that the GK-Mapper graph can change only at
          threshold-crossing events and is constant between
          consecutive critical events. We further provide a
          crossing-count bound for the number of critical events
          and recover the estimate $|\mathcal{T}|\le nc$ under a
          single-crossing condition.

    \item We empirically show that GK-Mapper performs well in all these cases as compared to SFCM.
\end{enumerate}

The rest of this paper is organised as follows.
Section~\ref{sec:Background} introduces the necessary background
and definitions.
Section~\ref{sec:safcm} proposes the GK-Mapper algorithm.
Section~\ref{sec:regularity} establishes membership regularity
for both FCM and GK-FCM.
Section~\ref{sec:results} presents the main stability framework.
Section~\ref{sec:empirical_validation} provides empirical
validation on synthetic and real-world datasets.
Section~\ref{sec:discussion} discusses the implications and
limitations of the framework.
Section~\ref{sec:conclusion} concludes the paper.

\section{Background}
\label{sec:Background}

Fuzzy set theory~\cite{zimmermann2011} extends classical set
membership by allowing each element to belong to a set with a
degree in the interval $[0,1]$ rather than in a strictly binary
manner.
It is useful for real-world data, where cluster boundaries are
often vague, overlapping, or uncertain.
In clustering, such partial memberships provide a natural way to
model ambiguity at the interfaces between groups.

The Fuzzy $C$-Mean (FCM) algorithm~\cite{bezdek1984fcm} is one of
the most widely used fuzzy clustering methods. It assigns
memberships to all clusters and determines cluster centres by
minimising a weighted objective function.
This soft partitioning makes FCM more flexible than hard
partitioning clustering methods, especially when the data contains
overlap or gradual transitions between groups.
In the standard formulation, however, FCM relies on the
Euclidean distance, which implicitly favours approximately
spherical clusters.

This section provides a brief overview of the definitions and
notations related to the GK-FCM, Mapper, and SFCM algorithms~\cite{bezdek1981,gustafson1979,singh2007,bui2021sfcm}.

\begin{definition}[Gustafson-Kessel FCM~\cite{gustafson1979,bezdek1981}]
\label{def:gkfcm}
The Gustafson-Kessel FCM (GK-FCM) algorithm addresses the
spherical-cluster limitation of the standard FCM by replacing the
Euclidean metric with a cluster-adaptive Mahalanobis-type
distance~\cite{demaesschalck2000}.
Each cluster is allowed to adapt its shape according to the
local covariance structure of the data, making GK-FCM
particularly suitable for datasets with ellipsoidal or
directionally stretched clusters.
This adaptive geometry provides the foundation for the cover
used in GK-Mapper.
Further detail on clustering variants and their applications
can be found in~\cite{bezdek1981,krishnapuram1993,xing2002,zhang2004}.
\end{definition}

\begin{algorithm}[t]
\caption{Gustafson-Kessel Fuzzy $C$-Mean (GK-FCM)}
\label{alg:gkfcm}
\begin{algorithmic}[1]
\Require Dataset $X = \{x_1, \dots, x_n\} \subset \mathbb{R}^p$,
         number of clusters $c$, fuzzifier $m > 1$, volume
         parameters $q_j > 0$, tolerance $\mathrm{tol}$
\Ensure  Membership matrix $U = [u_{ij}]$,
         cluster centres $V = \{v_1, \dots, v_c\}$

\State Initialise $U^{(0)}$ with $\sum_{j=1}^{c} u_{ij} = 1$ for
       all $i$; set $t \gets 0$

\Repeat
    \For{$j = 1$ to $c$}
        \State $v_j^{(t)} =
               \dfrac{\sum_{i=1}^{n} (u_{ij}^{(t)})^m x_i}
                     {\sum_{i=1}^{n} (u_{ij}^{(t)})^m}$
    \EndFor
    \For{$j = 1$ to $c$}
        \State $S_{f_j}^{(t)} =
               \sum_{i=1}^{n}(u_{ij}^{(t)})^m
               (x_i - v_j^{(t)})(x_i - v_j^{(t)})^\top$
        \State $A_j^{(t)} =
               q_j \cdot \det(S_{f_j}^{(t)})^{1/p}
               \cdot (S_{f_j}^{(t)})^{-1}$
    \EndFor
    \For{$i = 1$ to $n$; $j = 1$ to $c$}
        \State $d_{ij}^{(t)} =
               \sqrt{(x_i - v_j^{(t)})^\top A_j^{(t)}
                     (x_i - v_j^{(t)})}$
    \EndFor
    \For{$i = 1$ to $n$; $j = 1$ to $c$}
        \State $u_{ij}^{(t+1)} =
               \!\left[
               \sum_{k=1}^{c}
               \!\left(\dfrac{d_{ij}^{(t)}}{d_{ik}^{(t)}}\right)^{\!2/(m-1)}
               \right]^{-1}$
    \EndFor
    \State $t \gets t + 1$
\Until{$\|U^{(t)} - U^{(t-1)}\| < \mathrm{tol}$}
\State \Return $U,\, V$
\end{algorithmic}
\end{algorithm}

\begin{definition}[Mapper Algorithm~\cite{singh2007}]
\label{def:mapper}
Let $X$ be a dataset, let $f : X \to \mathbb{R}^d$ be a
continuous filter function, and let $\mathcal{U} = \{U_a\}$
be an open cover of $f(X)$.
For each $U_a \in \mathcal{U}$, apply a clustering
algorithm to the preimage $f^{-1}(U_a)$, producing
clusters $\{C_{a,1}, C_{a,2}, \ldots\}$.
The \emph{Mapper complex} is the simplicial complex where each
cluster $C_{a, i}$ is a node and two nodes are connected by
an edge whenever $C_{a, i} \cap C_{b,j} \neq \emptyset$.
More generally, a $k$-simplex is added whenever $k{+}1$ clusters
have a common nonempty intersection.
\end{definition}

\begin{definition}[F-Mapper Algorithm~\cite{bui2020fmapper}]
\label{def:fmapper}
Let $X$ be a finite dataset in a metric space, and let
$f : X \to \mathbb{R}$ be a continuous filter.
F-Mapper partitions $f(X)$ into $N$ fuzzy clusters by FCM,
producing membership degrees $u_{ij} \in [0,1]$ satisfying
$\sum_{j=1}^N u_{ij} = 1$.
For a threshold $t \in [0,1]$, define fuzzy cover intervals
$U_j = \{f(x_i) : u_{ij} \geq t\}$ and pullback sets
$f^{-1}(U_j)$.
Each pullback set is clustered into connected components, and
the \emph{F-Mapper complex} is the nerve of these components.
\end{definition}

\begin{definition}[SFCM Algorithm~\cite{bui2021sfcm}]
\label{def:sfcm}
Let $X = \{x_1, \ldots, x_n\} \subset \mathbb{R}^p$,
$c \geq 2$, $m > 1$, and $t \in (T_0, T_1]$, where
$T_0 = \min_{i,j} u_{ij}$ and $T_1 = \min_i \max_j u_{ij}$.
SFCM minimises the FCM objective
$J_m(U,V) = \sum_{i=1}^n \sum_{j=1}^c (u_{ij})^m
\|x_i - v_j\|^2$,
producing clusters $C_j(t) = \{x_i : u_{ij}(m) \geq t\}$.
The \emph{SFCM graph} is $G_t(m) = (V, E(m))$ where
$V = \{1,\ldots,c\}$ and
$E(m) = \{(j,k) : j \neq k,\; C_j \cap C_k \neq \emptyset\}$.
The complete algorithm is given in Algorithm~\ref{alg:sfcm}.
\end{definition}

\begin{algorithm}[t]
\caption{Shape Fuzzy $C$-Mean (SFCM)}
\label{alg:sfcm}
\begin{algorithmic}[1]
\Require $X = \{x_1, \dots, x_n\} \subset \mathbb{R}^p$,
         number of clusters $c$, fuzzifier $m > 1$,
         overlap threshold $t$,
         tolerance $\mathrm{tol} \in (0,1)$,
         maximum iterations $k_{\max}$
\Ensure  SFCM graph $G_t = (V, E_t)$

\State Initialise $U^{(0)} = [u_{ij}^{(0)}]$ such that
       $u_{ij}^{(0)} \in [0,1]$ and
       $\sum_{j=1}^{c} u_{ij}^{(0)} = 1$ for all $i$
\State Set $k \gets 0$

\Repeat
    \For{$j = 1$ to $c$}
        \State $v_j \gets \dfrac{\displaystyle\sum_{i=1}^{n}
               \bigl(u_{ij}^{(k)}\bigr)^m x_i}
               {\displaystyle\sum_{i=1}^{n}
               \bigl(u_{ij}^{(k)}\bigr)^m}$
    \EndFor
    \For{$i = 1$ to $n$; $j = 1$ to $c$}
        \State $u_{ij}^{(k+1)} \gets
               \left[
               \displaystyle\sum_{\ell=1}^{c}
               \left(
               \frac{\|x_i - v_j\|}{\|x_i - v_\ell\|}
               \right)^{\!2/(m-1)}
               \right]^{-1}$
    \EndFor
    \State $k \gets k + 1$
\Until{$\max_{i,j}\,\bigl|u_{ij}^{(k)} - u_{ij}^{(k-1)}\bigr| < \mathrm{tol}$
       \textbf{ or } $k = k_{\max}$}

\State Compute $T_0 \gets \min_{i,j}\; u_{ij}$ \quad and \quad
       $T_1 \gets \min_{i}\,\max_{j}\; u_{ij}$
\State Clamp $t \gets \max(T_0,\,\min(t,\, T_1))$

\For{$j = 1$ to $c$}
    \State $C_j(t) \gets \{x_i \in X : u_{ij} \geq t\}$
\EndFor

\State $V \gets \{1, \dots, c\}$, \quad $E_t \gets \emptyset$
\For{$j = 1$ to $c-1$; \; $k = j{+}1$ to $c$}
    \If{$C_j(t) \cap C_k(t) \neq \emptyset$}
        \State $E_t \gets E_t \cup \{(j,\, k)\}$
    \EndIf
\EndFor

\State \Return $G_t = (V,\, E_t)$
\end{algorithmic}
\end{algorithm}

\section{GK-Fuzzy Mapper Algorithm}
\label{sec:safcm}

The SFCM algorithm uses the Euclidean-distance assumption
from FCM, which naturally favours spherical cluster shapes.
However, many real-world datasets contain clusters that are
elongated, ellipsoidal, or otherwise non-spherical.
To address this limitation, we introduce the Gustafson Kessel Mapper(GK-Mapper) algorithm.
This method removes the spherical constraint by replacing the
Euclidean cover used in SFCM with a geometry-adaptive cover
derived from the Gustafson-Kessel FCM
algorithm~\cite{gustafson1979}.
Compared with SFCM, GK-Mapper uses the same number of parameters.
The only change is that Euclidean distance is replaced by a
cluster-adaptive distance~\cite[Theorem~22.1]{bezdek1981}.

The detailed computational procedure is presented in
Algorithm~\ref{alg:safcm}.
The algorithm follows the standard fuzzy clustering framework,
where cluster centres and memberships are iteratively updated
using the Gustafson-Kessel adaptive distance.
After convergence, the fuzzy memberships are thresholded to
construct the adaptive cover, and the Mapper graph is obtained
by connecting clusters with nonempty intersections.

\begin{algorithm}[t]
\caption{Gustafson Kessel Mapper (GK-Mapper)}
\label{alg:safcm}
\begin{algorithmic}[1]
\Require $X=\{x_1,\dots,x_n\}\subset \mathbb{R}^p$,
         number of clusters $c$, fuzzifier $m>1$,
         threshold $t$, tolerance $\mathrm{tol}$
\Ensure  GK-Mapper graph $G_t(m)=(V,E)$

\State Initialise $U^{(0)}$ with
       $\sum_{j=1}^{c}u_{ij}^{(0)}=1$ for all $i$
\Repeat
    \For{$j=1$ to $c$}
        \State $v_j=\dfrac{\sum_{i=1}^{n}(u_{ij})^m x_i}
                          {\sum_{i=1}^{n}(u_{ij})^m}$
        \State $S_{f_j}=\sum_{i=1}^{n}(u_{ij})^m
               (x_i-v_j)(x_i-v_j)^\top$
        \State $A_j=q_j\,\det(S_{f_j})^{1/p}S_{f_j}^{-1}$
    \EndFor
    \For{$i=1$ to $n$; $j=1$ to $c$}
        \State $d_{ij}^{\mathrm{GK}}=
               \sqrt{(x_i-v_j)^\top A_j(x_i-v_j)}$
    \EndFor
    \For{$i=1$ to $n$; $j=1$ to $c$}
        \State $u_{ij}=
               \!\left[
               \sum_{k=1}^{c}
               \!\left(\dfrac{d_{ij}^{\mathrm{GK}}}
                             {d_{ik}^{\mathrm{GK}}}\right)^{\!2/(m-1)}
               \right]^{-1}$
    \EndFor
\Until{$\|U^{(t+1)}-U^{(t)}\|_F<\mathrm{tol}$}
\For{$j=1$ to $c$}
    \State $C_j(t)\gets\{x_i\in X: u_{ij}\ge t\}$
\EndFor
\State $V\gets \{1,\dots,c\}$, $E\gets \emptyset$
\For{$j=1$ to $c-1$; $k=j{+}1$ to $c$}
    \If{$C_j(t)\cap C_k(t)\neq \emptyset$}
        $E\gets E\cup \{(j,k)\}$
    \EndIf
\EndFor
\State \Return $G_t(m)=(V,E)$
\end{algorithmic}
\end{algorithm}


\section{Membership Regularity}
\label{sec:regularity}

Before developing the stability framework, we establish that
the GK-FCM membership function depends smoothly on the fuzzifier
parameter $m$ along the optimisation path.
This regularity is the foundation on which all subsequent
results rest.
The following proposition is stated for the moving-centre
setting, where the centres $v_j(m)$ and (in the GK-Mapper case)
the adaptive matrices $A_j(m)$ depend on $m$. The arguments
depend only on the composition structure of the membership
formula, and not on the specific distance used. 
Before stating the main regularity result, we fix the standing assumptions 
that govern Sections~4 and~5.

\begin{assumption}[H1-Continuity]
\label{ass:H1}
For each $i \in \{1,\ldots,n\}$ and $j \in \{1,\ldots,c\}$, the membership 
function $m \mapsto u_{ij}(m)$ is continuous at $m_0$.
\end{assumption}

\begin{assumption}[H2-$C^1$ Optimisation Path]
\label{ass:H2}
The optimisation path $m \mapsto V(m) = \{v_1(m), \ldots, v_c(m)\}$ is $C^1$ 
on an open interval $\mathcal{I} \ni m_0$. In the GK-Mapper case, the adaptive 
matrices $m \mapsto A_j(m)$ are additionally $C^1$ on $\mathcal{I}$ for every 
$j \in \{1,\ldots,c\}$. Furthermore, the non-degeneracy condition
\begin{equation}\label{eq:nondeg}
    d_{il}(m_0) > 0 \qquad \forall\, i \in \{1,\ldots,n\},\; 
    l \in \{1,\ldots,c\}
\end{equation}
holds, that is, no data point coincides with any cluster centre at $m_0$.
\end{assumption}

\begin{proposition}[Membership Regularity Along the Optimisation Path]
\label{prop:regularity}
Let $m\mapsto V(m)=\{v_1(m),\dots,v_c(m)\}$ be a $C^1$ path of
cluster centres on an interval $\mathcal I\subset (1,\infty)$.
In the GK-Mapper case, also assume that $m\mapsto A_j(m)$ is $C^1$
for every $j\in\{1,\dots,c\}$. Define
\[
    d_{ij}(m) =
    \begin{cases}
        \|x_i-v_j(m)\|, & \text{SFCM},\\[4pt]
        \sqrt{(x_i-v_j(m))^\top A_j(m)(x_i-v_j(m))}, &
        \text{GK},
    \end{cases}
\]
and assume $d_{ij}(m)>0$ for all $i,j$ and all $m\in\mathcal I$.
Set
\[
r_{ijk}(m)=\frac{d_{ij}(m)}{d_{ik}(m)},
\qquad
b(m)=\frac{2}{m-1},
\]
and
\[
D_{ij}(m)=\sum_{k=1}^{c} r_{ijk}(m)^{b(m)}.
\]
Then
\[
u_{ij}(m)=\frac{1}{D_{ij}(m)}
\]
satisfies:
\begin{enumerate}[label=\upshape(\roman*)]
    \item $u_{ij}(m)\in(0,1)$ for all $m\in\mathcal I$;
    \item $u_{ij}(m)$ is of class $C^1$ on $\mathcal I$;
    \item $\sum_{j=1}^{c}u_{ij}(m)=1$ for all $i$ and $m$;
    \item $u_{ij}(m)$ is differentiable with
    \[
        u_{ij}'(m)=T_1(m)+T_2(m),
    \]
    where
    \[
        T_1(m)
        =
        \frac{2u_{ij}(m)^2}{(m-1)^2}
        \sum_{k=1}^{c}
        r_{ijk}(m)^{b(m)}\ln r_{ijk}(m),
    \]
    and $T_2(m)$ contains the contribution coming from the
    motion of the centres and, in the GK-Mapper case, the adaptive
    matrices.
\end{enumerate}
More explicitly, for SFCM,
\[
        T_2^{\mathrm{SFCM}}(m)
        =
        \frac{2u_{ij}(m)^2}{m-1}
        \sum_{l=1}^{c}
        \frac{w_{ijl}}{d_{il}(m)^2}
        (x_i-v_l)^\top \dot v_l,
\]
whereas for GK-Mapper,
\begin{align*}
        T_2^{\mathrm{GK}}(m)
        &=
        \frac{2u_{ij}(m)^2}{m-1}
        \sum_{l=1}^{c}
        \frac{w_{ijl}}{d_{il}(m)^2} \\
        &\quad\times
        \left[
        (x_i-v_l)^\top A_l\dot v_l
        -\frac12 (x_i-v_l)^\top \dot A_l(x_i-v_l)
        \right],
\end{align*}
with
\[
        w_{ijl}=
        \begin{cases}
            D_{ij}(m)-1, & l=j,\\
            -r_{ijl}(m)^{b(m)}, & l\neq j.
        \end{cases}
\]
\end{proposition}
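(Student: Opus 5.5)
The plan is to treat $u_{ij}(m)=1/D_{ij}(m)$ as a composition of elementary $C^1$ maps and differentiate with the chain rule. Items (i) and (iii) are purely algebraic and hold for each fixed $m$. For (iii), note that $r_{ijk}=d_{ij}/d_{ik}$, so
\[
D_{ij}(m)=d_{ij}(m)^{b(m)}\sum_{k=1}^c d_{ik}(m)^{-b(m)},
\]
and hence
\[
u_{ij}(m)=\frac{d_{ij}^{-b}}{\sum_k d_{ik}^{-b}}.
\]
Summing over $j$ gives exactly $1$. For (i), the $k=j$ term of $D_{ij}$ equals $r_{ijj}^{b}=1$, and every other term $r_{ijk}^{b}$ is strictly positive because all $d_{ik}>0$. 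Since $c\ge 2$, this gives $D_{ij}>1$, so $u_{ij}\in(0,1)$.

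For (ii), I would first show that each $d_{il}$ is $C^1$ and positive on $\mathcal I$.
\begin{itemize}
\item In the SFCM case, $d_{il}^2=\|x_i-v_l(m)\|^2$ is a $C^1$ polynomial in the components of $v_l$.
\item In the GK case, $d_{il}^2=(x_i-v_l)^\top A_l(x_i-v_l)$ is $C^1$ because both $v_l$ and $A_l$ are $C^1$.
\end{itemize}
Since $d_{il}^2>0$ by hypothesis, the square root is smooth there, so $d_{il}$ is $C^1$. Next, write $r_{ijk}^{b}=\exp\bigl(b(m)\ln r_{ijk}(m)\bigr)$. Here $b(m)=2/(m-1)$ is smooth on $(1,\infty)$, and $\ln r_{ijk}=\ln d_{ij}-\ln d_{ik}$ is $C^1$. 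Hence $D_{ij}$ is $C^1$ and $\ge 1$, so $u_{ij}=1/D_{ij}$ is $C^1$. This also gives the differentiability claimed in (iv).

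For the derivative formula in (iv), I would start from
\[
u_{ij}'=-u_{ij}^2D_{ij}',
\qquad
\frac{d}{dm}r_{ijk}^{b}=r_{ijk}^{b}\Bigl(b'\ln r_{ijk}+b\Bigl(\tfrac{d_{ij}'}{d_{ij}}-\tfrac{d_{ik}'}{d_{ik}}\Bigr)\Bigr).
\]
The term containing $b'=-2/(m-1)^2$ produces exactly $T_1$. The remaining term is
\[
-u_{ij}^2\,b\sum_{k}r_{ijk}^{b}\Bigl(\tfrac{d_{ij}'}{d_{ij}}-\tfrac{d_{ik}'}{d_{ik}}\Bigr).
\]
I would reorganise it by collecting coefficients of $d_{il}'/d_{il}$:
\begin{itemize}
\item the $k=j$ summand vanishes identically;
\item the coefficient of $d_{ij}'/d_{ij}$ is $-u^2b\,(D_{ij}-1)$;
\item for $l\ne j$, the coefficient of $d_{il}'/d_{il}$ is $+u^2b\,r_{ijl}^{b}$.
\end{itemize}
So the remaining term equals $-u^2b\sum_l w_{ijl}\,d_{il}'/d_{il}$, with the stated weights $w_{ijl}$.

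Finally, I would compute $d_{il}'/d_{il}=(d_{il}^2)'/(2d_{il}^2)$ in each case.
\begin{itemize}
\item SFCM: $(d_{il}^2)'=-2(x_i-v_l)^\top\dot v_l$.
\item GK: $(d_{il}^2)'=-2(x_i-v_l)^\top A_l\dot v_l+(x_i-v_l)^\top\dot A_l(x_i-v_l)$.
\end{itemize}
Substituting, and using $b=2/(m-1)$, yields $T_2^{\mathrm{SFCM}}$ and $T_2^{\mathrm{GK}}$ with the sign and the factor $\tfrac12$ as stated.

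I expect the main obstacle to be the bookkeeping in this last step: getting the signs and the weights $w_{ijl}$ right, and justifying that $\dot v_l^\top A_l(x_i-v_l)=(x_i-v_l)^\top A_l\dot v_l$. That identity needs the symmetry of $A_l$. I would obtain it from $A_l=q_l\det(S_{f_l})^{1/p}S_{f_l}^{-1}$ with $S_{f_l}$ symmetric, so $A_l$ is symmetric along the path. Alternatively, one can assume symmetry directly, or replace $A_l$ by its symmetric part, which leaves $d_{il}$ unchanged.
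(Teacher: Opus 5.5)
Your proposal is correct and follows the paper's route: $D_{ij}>1$ from the $k=j$ term for (i), the rewriting $r_{ijk}^{b}=\exp(b\ln r_{ijk})$ for (ii), and $u_{ij}'=-u_{ij}^2D_{ij}'$ with the chain rule for (iv). You go further than the paper in two places: you prove (iii) directly from $u_{ij}=d_{ij}^{-b}/\sum_k d_{ik}^{-b}$ rather than citing the normalisation property, and you carry out the $T_2$ bookkeeping that the paper only asserts, correctly flagging that the GK formula as written needs $A_l$ symmetric, which is inherited from $S_{f_l}$ (your fallback of passing to the symmetric part would change the $\dot v_l$ term, so the inherited symmetry is the justification to use).
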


\begin{proof}
Since $d_{ij}(m)>0$ and the distance functions are $C^1$ in $m$
under the stated assumptions, each ratio $r_{ijk}(m)$ is strictly
positive and $C^1$. For $m>1$, $b(m)>0$, so each summand
$r_{ijk}(m)^{b(m)}$ is positive and $D_{ij}(m)>0$. Since the
term $k=j$ equals $1$ and $c\geq 2$, we have $D_{ij}(m)>1$.
Therefore $u_{ij}(m)\in(0,1)$.

Next, each summand can be written as
\[
    r_{ijk}(m)^{b(m)}
    =
    \exp\!\bigl(b(m)\ln r_{ijk}(m)\bigr).
\]
Since both $b(m)$ and $\ln r_{ijk}(m)$ are $C^1$ on
$\mathcal I$, it follows that $D_{ij}$ is $C^1$. Hence
$u_{ij}=D_{ij}^{-1}$ is also $C^1$ on $\mathcal I$.

The identity $\sum_{j=1}^{c}u_{ij}(m)=1$ is the standard
normalisation property of fuzzy
memberships~\cite[Def.~5.1]{bezdek1981}.

Finally, differentiating $u_{ij}=D_{ij}^{-1}$ gives
\[
    u_{ij}'(m)=-u_{ij}(m)^2D_{ij}'(m).
\]
Using $b'(m)=-2/(m-1)^2$ and the chain rule,
\begin{align*}
\frac{d}{dm}\left[r_{ijk}(m)^{b(m)}\right]
&=
r_{ijk}(m)^{b(m)}
\Bigg[
-\frac{2}{(m-1)^2}\ln r_{ijk}(m) \\
&\qquad\qquad
+b(m)\frac{\dot r_{ijk}(m)}{r_{ijk}(m)}
\Bigg].
\end{align*}
Therefore,
\begin{align*}
D_{ij}'(m)
&=
-\frac{2}{(m-1)^2}
\sum_{k=1}^{c}r_{ijk}(m)^{b(m)}\ln r_{ijk}(m)\\
&\quad+
b(m)
\sum_{k=1}^{c}
r_{ijk}(m)^{b(m)}
\frac{\dot r_{ijk}(m)}{r_{ijk}(m)}.
\end{align*}
Substituting this expression into $u_{ij}'=-u_{ij}^2D_{ij}'$
gives the decomposition $u_{ij}'(m)=T_1(m)+T_2(m)$. The stated
forms of $T_2^{\mathrm{SFCM}}$ and $T_2^{\mathrm{GK}}$ follow
by differentiating the corresponding distance functions and
collecting the terms associated with each moving centre and
adaptive matrix.
\end{proof}

\begin{corollary}[Non-monotonicity Along the Optimisation Path]
\label{cor:nonmonotone}
Under the hypotheses of Proposition~\ref{prop:regularity}, each
membership function $u_{ij}(m)$ is $C^1$ on $\mathcal I$.
However, along the optimisation path, $u_{ij}(m)$ is not
necessarily monotone.

Indeed, the derivative has the form
\[
    u_{ij}'(m)=T_1(m)+T_2(m),
\]
where $T_1$ is determined by the fuzzifier-dependent exponent and
distance ratios, while $T_2$ contains the effect of centre motion
and, in GK-Mapper, adaptive-matrix motion. Since $T_2$ may have
either sign, the sign of $u_{ij}'(m)$ is not determined by the
distance ratios alone. Consequently, monotonicity of $u_{ij}(m)$
cannot be assumed without additional restrictions on the
optimisation path.
\end{corollary}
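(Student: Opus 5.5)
The corollary makes two claims. The first is that $u_{ij}$ is $C^1$ on $\mathcal I$. The second is that monotonicity can fail along the optimisation path. The first is immediate: item (ii) of Proposition~\ref{prop:regularity} already gives it. So the work lies in justifying the non-monotonicity claim. I would do this by exhibiting the structure of $u_{ij}'=T_1+T_2$ and then producing a configuration in which the sign of $u_{ij}'$ is dictated by $T_2$, which can be made of either sign.

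First I would analyse $T_1$ alone. Write $b=b(m)$ and set $s_k=\ln r_{ijk}$, so that $r_{ijk}^{b}\ln r_{ijk}=e^{b s_k}s_k$. The term $k=j$ vanishes. If $d_{ij}$ is the smallest distance, then every $s_k\le 0$, so $T_1\le 0$. If $d_{ij}$ is the largest, then $T_1\ge 0$. This recovers the familiar frozen-centre behaviour: as $m$ grows, memberships flatten towards $1/c$.

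Next I would show that $T_2$ is not sign-constrained. The velocities $\dot v_l$ (and, in the GK case, $\dot A_l$) enter $T_2$ linearly. Their coefficients are $\frac{2u_{ij}^2}{m-1}\,\frac{w_{ijl}}{d_{il}^2}(x_i-v_l)^\top$ in the SFCM case, with the analogous $A_l$-weighted coefficients in the GK case. Take $l=j$: then $w_{ijj}=D_{ij}-1>0$ because $c\ge 2$, and $x_i-v_j\neq 0$ by the non-degeneracy assumption. Hence this coefficient vector is nonzero. Reversing $\dot v_j$ reverses the sign of its contribution, and scaling $\dot v_j$ makes the contribution arbitrarily large.

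The remaining step, and the main obstacle, is that $\dot v_l$ is not free. Along a genuine optimisation path it is determined implicitly by the stationarity equations of $J_m$. A fully rigorous statement therefore needs either of two things. One option is an explicit small example, say $c=2$ in one dimension with a symmetric-breaking configuration, in which the fixed-point centres drift with $m$ so that $u_{ij}$ first increases and then decreases. The other option is to read the corollary, as its wording suggests, as a statement about the hypotheses of Proposition~\ref{prop:regularity}: these assume only an arbitrary $C^1$ path $V(m)$.

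I would take the second route as the proof. Given any $m_0\in\mathcal I$ and centres at $m_0$, construct $C^1$ paths $v_j(m)=v_j(m_0)+\phi(m)\,\eta$, with $\eta$ aligned with $x_i-v_j$ and $\phi$ a smooth function oscillating in sign near $m_0$, with amplitude large enough that $|T_2|>|T_1|$ there. For small perturbations the distances remain positive by continuity. Then $u_{ij}'$ takes both signs on $\mathcal I$, so $u_{ij}$ is not monotone. This shows that monotonicity cannot be deduced from the distance ratios alone. I would add a remark that a concrete numerical GK-FCM run confirms that such sign changes also occur on actual optimisation paths.
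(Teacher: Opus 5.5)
Your proposal follows the paper's own route: the $C^1$ claim is item (ii) of Proposition~\ref{prop:regularity}, and non-monotonicity comes from $T_2$ being linear in the free velocities $\dot v_l$ (and $\dot A_l$). The paper merely asserts that $T_2$ can take either sign, whereas you check that the $\dot v_j$-coefficient is nonzero ($w_{ijj}=D_{ij}-1>0$, $x_i\neq v_j$) and build an explicit path, which makes the argument rigorous under the arbitrary-$C^1$-path reading that the paper also uses implicitly. One tightening is needed: the sign of that contribution is governed by $\phi'$, not $\phi$, so take e.g.\ $\phi(m)=\varepsilon\sin\bigl(\omega(m-m_0)\bigr)$ with $\varepsilon$ small (keeping all distances positive on $\mathcal I$) and $\varepsilon\omega$ large (so that $|T_2|>|T_1|$), rather than a large amplitude.
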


\begin{proof}
The result follows directly from the derivative decomposition in
Proposition~\ref{prop:regularity}. The term $T_2$ depends on
$\dot v_l$ and, in the GK-Mapper case, on $\dot A_l$. These quantities
may vary in direction and magnitude along the optimisation path.
Hence $T_2$ may be positive, negative, or zero, and no general
sign condition for $u_{ij}'(m)$ follows from the membership
formula alone. Therefore $u_{ij}(m)$ need not be monotone on
$\mathcal I$.
\end{proof}

\begin{remark}
\label{rem:universal}
Proposition~\ref{prop:regularity} and
Corollary~\ref{cor:nonmonotone} apply to both SFCM and GK-Mapper.
The only difference is the form of the distance function: SFCM
uses Euclidean distances, while GK-Mapper uses cluster-adaptive
Gustafson-Kessel distances. Hence, the subsequent stability
results apply to both constructions, with the GK-Mapper case
including the additional contribution from the evolution of the
adaptive matrices.
\end{remark}

\section{Main Results}
\label{sec:results}

In this section, we present the main theoretical results that
describe how the GK-Mapper graphs change as the fuzzifier
$m$ varies.
These results identify the parameter setting in which the graph
carries structural information, establishes a local stability
zone around any reference value of $m$, quantify graph variation
under small perturbations, and describe the critical event
structure of the graph along the moving centre optimisation path.

We begin by characterising the Edgeless Zone
(Section~\ref{sub:graveyard}), where the graph has no edges.
We then establish a Stability Zone
(Section~\ref{sub:stability}) in which the graph remains
unchanged under small changes in $m$.
We next analyse the Instability Zone
(Section~\ref{sub:instability}) and derive an upper bound on
edge changes.
Finally, we describe the Critical Event Structure
(Section~\ref{sub:critical}) and analyse when the graph
eventually freezes beyond a finite threshold $m^{**}$.

\subsection{The Edgeless Zone}
\label{sub:graveyard}

A parameter pair $(m, t)$ lies in the \emph{Edgeless Zone}
if $G_t(m)$ has zero edges, then the graph carries no
structural information.
Theorem~\ref{thm:graveyard} establishes the necessary and
sufficient condition for avoiding the Edgeless Zone.
A visual illustration is provided in
Fig.~\ref{fig:theorem1_graveyard_zone}.

\begin{theorem}[Edgeless Zone]
\label{thm:graveyard}
Let $G _t(m)$ be the GK-Mapper graph.
Then $G _t(m)$ has at least one edge if and only if
\begin{equation}
    t
    \;\leq\;
    \max_{i}\,\max_{j \neq k}
    \min\bigl\{u_{ij}(m),\, u_{ik}(m)\bigr\}.
    \label{eq:graveyard-cond}
\end{equation}
Equivalently, $G_t(m)$ has no edges iff 
$t > \max_i \max_{j\neq k}\min\{u_{ij}(m), u_{ik}(m)\}$.
\end{theorem}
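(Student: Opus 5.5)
The plan is to unfold the definition of an edge in Algorithm~\ref{alg:safcm} and reduce the claim to an elementary statement about a finite maximum. By construction, $(j,k)$ with $j\neq k$ is an edge of $G_t(m)$ exactly when $C_j(t)\cap C_k(t)\neq\emptyset$. Since $C_j(t)=\{x_i : u_{ij}(m)\ge t\}$, this holds exactly when there is an index $i$ with $u_{ij}(m)\ge t$ and $u_{ik}(m)\ge t$. The two inequalities together say precisely that $\min\{u_{ij}(m),u_{ik}(m)\}\ge t$. Hence
\[
(j,k)\in E \iff \exists\, i:\ \min\{u_{ij}(m),u_{ik}(m)\}\ge t .
\]

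Next I will quantify over all pairs. The graph has at least one edge if and only if there exist $i$ and $j\neq k$ with $\min\{u_{ij}(m),u_{ik}(m)\}\ge t$. Set $M(m)=\max_i\max_{j\neq k}\min\{u_{ij}(m),u_{ik}(m)\}$. This is a maximum over a finite, nonempty index set, since $n\ge1$ and $c\ge2$, so it is attained at some triple $(i^*,j^*,k^*)$.

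For the forward direction, if some triple satisfies $\min\ge t$, then $M(m)\ge t$ because $M(m)$ dominates every term. For the converse, if $t\le M(m)$, then the maximising triple $(i^*,j^*,k^*)$ gives $x_{i^*}\in C_{j^*}(t)\cap C_{k^*}(t)$, so $(j^*,k^*)$ is an edge. The "equivalently" clause is just the contrapositive.

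The only point needing care is attainment of the maximum, which is immediate from finiteness; this is why the non-strict inequality in \eqref{eq:graveyard-cond} is correct. No regularity from Proposition~\ref{prop:regularity} is needed, because the argument is pointwise in $m$. I will also remark that the argument uses only the thresholding structure and not the GK distance, so the same statement holds for SFCM.
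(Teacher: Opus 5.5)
Your proposal is correct and follows essentially the same route as the paper: unfold the edge condition $C_j(t)\cap C_k(t)\neq\emptyset$ into $\min\{u_{ij}(m),u_{ik}(m)\}\ge t$ for some $i$, then compare with the finite maximum in both directions. The only difference is that you make explicit the attainment of the maximum over the finite index set, which the paper's $(\Leftarrow)$ direction uses only implicitly.
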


\begin{proof}
$(\Rightarrow)$
Suppose $G_t(m)$ contains an edge; then
$C_j \cap C_k \neq \emptyset$ for some $j \neq k$.
Let $x_i$ be a point in this intersection.
Then $u_{ij}(m) \geq t$ and $u_{ik}(m) \geq t$, so
$\min\{u_{ij}(m), u_{ik}(m)\} \geq t$,
giving~\eqref{eq:graveyard-cond}.

$(\Leftarrow)$
If~\eqref{eq:graveyard-cond} holds, there exist $i$, $j$, $k$
with $j \neq k$ such that $u_{ij}(m) \geq t$ and
$u_{ik}(m) \geq t$.
Therefore $x_i \in C_j \cap C_k$ and an edge $(j,k)$ exists.
\end{proof}

We define
$t _{\mathrm{crit}}
 = \max_i \max_{j\neq k}\min\{u_{ij}(m), u_{ik}(m)\}$
as the critical threshold above which the graph enters the
Edgeless Zone.

\begin{figure*}[!t]
    \centering
    \begin{subfigure}[t]{0.48\textwidth}
        \centering
        \includegraphics[width=\textwidth]{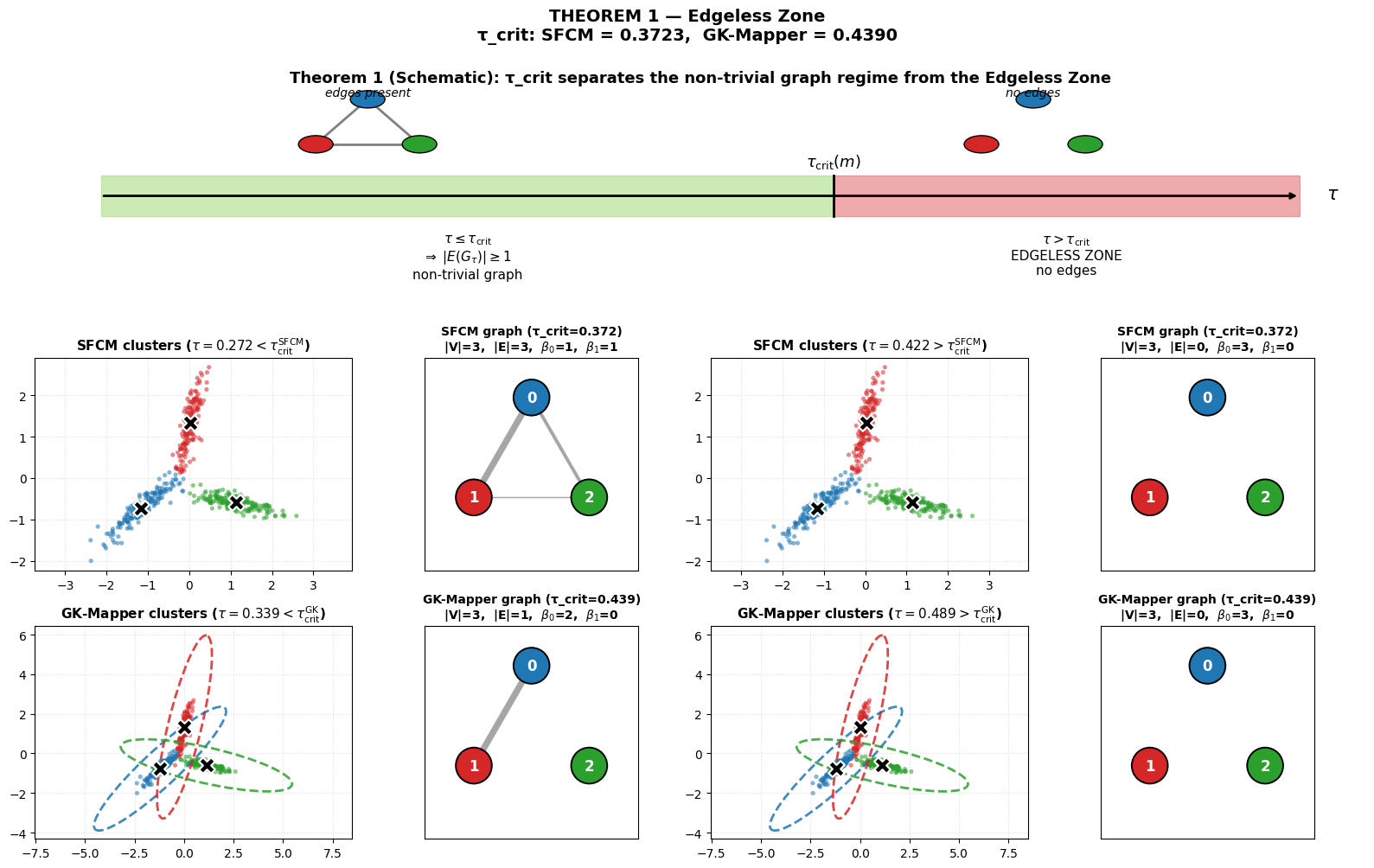}
        \caption{Visual illustration of Theorem~\ref{thm:graveyard}
        for the anisotropic ellipsoidal dataset at $m_0=2.0$.
        The critical threshold $t_{\mathrm{crit}}(m)$ separates
        the non-trivial graph regime from the Edgeless Zone.}
        \label{fig:theorem1_graveyard_zone}
    \end{subfigure}
    \hfill
    \begin{subfigure}[t]{0.48\textwidth}
        \centering
        \includegraphics[width=\textwidth]{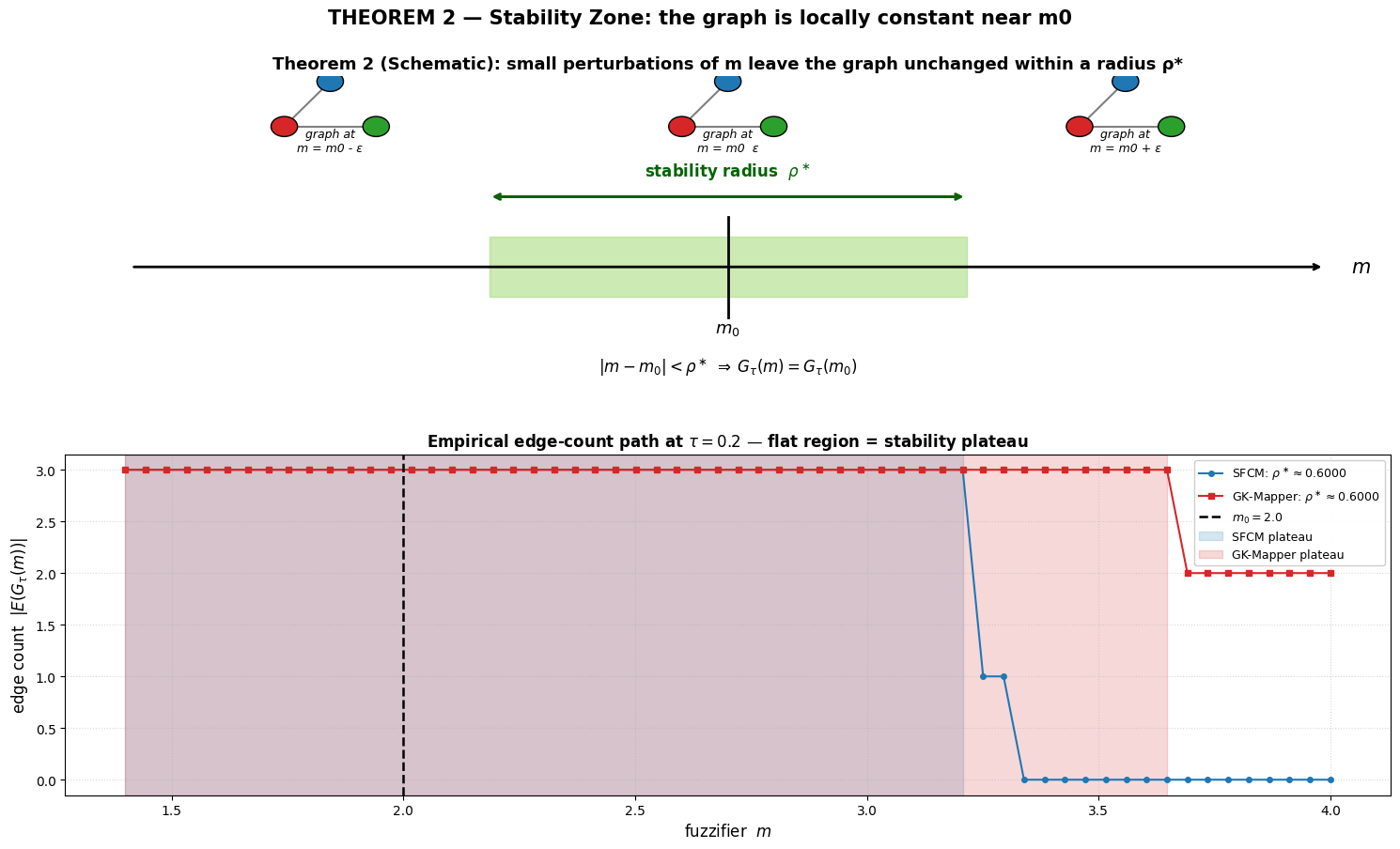}
        \caption{Visual illustration of Theorem~\ref{thm:stability}
        for the anisotropic ellipsoidal dataset at $m_0=2.0$ and
        $t=0.2$. The stability radius $r^{\ast}$ defines a
        local interval where no membership value crosses $t$,
        so $G_{t}(m)=G_{t}(m_0)$. The flat edge-count paths
        confirm the predicted local graph constancy.}
        \label{fig:theorem2_stability_zone}
    \end{subfigure}
    \caption{Empirical illustrations of Theorems~\ref{thm:graveyard}
    and~\ref{thm:stability}. Subfigure~\ref{fig:theorem1_graveyard_zone}
    shows the Edgeless Zone transition, while
    Subfigure~\ref{fig:theorem2_stability_zone} shows local graph
    stability around $m_0=2.0$.}
    \label{fig:theorem1_theorem2_combined}
\end{figure*}

\subsection{The Stability Zone}
\label{sub:stability}

After identifying the region where the graph becomes edgeless , we focus on the area where the graph structure is preserved under small perturbations of the fuzzifier $m$. 
Intuitively, if the membership values do not cross the threshold
$t$, the induced cover and hence the graph topology remain
unchanged.
The following theorem gives a radius $r^*$ for which the graph remains unchanged for a chosen $m$. Its behaviour described is illustrated in
Fig.~\ref{fig:theorem2_stability_zone}.

Throughout Theorem~\ref{thm:stability}, we assume
\begin{equation}
    d_{il}(m_0) > 0,
    \qquad
    \forall\, i \in \{1,\dots,n\},\; l \in \{1,\dots,c\},
    \label{eq:nondeg}
\end{equation}
that is, no data point coincides with any cluster centre at
$m_0$. This ensures that every distance denominator appearing
in the proof is strictly positive. This is the standard
non-degeneracy condition in FCM~\cite{bezdek1981}.

Moreover, for each $i \in \{1,\dots,n\}$ and
$j \in \{1,\dots,c\}$, let $d_{ij}(m)$ denote the distance
from $x_i$ to the cluster centre $v_j(m)$ under the relevant
metric:
\begin{equation}
    d_{ij}(m) =
    \begin{cases}
        \|x_i - v_j(m)\|, & \text{SFCM},\\[4pt]
        \sqrt{(x_i-v_j(m))^\top A_j(m)\,(x_i-v_j(m))}, &
        \text{GK-Mapper}.
    \end{cases}
    \label{eq:dist}
\end{equation}
Define
\[
r_{ijk}(m) = \frac{d_{ij}(m)}{d_{ik}(m)}, \qquad
D_{ij}(m) = \sum_{k=1}^{c} r_{ijk}(m)^{b(m)},
\]
\[
b(m) = \frac{2}{m-1}, \qquad
u_{ij}(m) = \frac{1}{D_{ij}(m)}.
\]
Note that $r_{ijj}(m)\equiv 1$, and therefore
\[
D_{ij}(m)=1+\sum_{k\neq j} r_{ijk}(m)^{b(m)}.
\]

\begin{theorem}[Local Stability Zone]
\label{thm:stability}
Let $m_0>1$ and $t\in(0,1)$ satisfy
\[
    u_{ij}(m_0)\neq t
    \qquad
    \text{for all } i\in\{1,\dots,n\},\ j\in\{1,\dots,c\}.
\]
Assume the non-degeneracy condition~\eqref{eq:nondeg} and
suppose that the hypotheses of Proposition~\ref{prop:regularity}
hold on a neighbourhood of $m_0$. Then there exists $r^*>0$
such that
\begin{equation}
    G_t(m)=G_t(m_0)
    \qquad
    \text{whenever } |m-m_0|<r^*.
    \label{eq:stability}
\end{equation}
Hence, the SFCM and GK-Mapper graphs are locally constant with respect
to the fuzzifier parameter near every non-threshold value $m_0$.
\end{theorem}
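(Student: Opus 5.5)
The plan is a continuity-plus-finiteness argument. The graph $G_t(m)$ is completely determined by the finite family of Boolean indicators $\mathbf{1}[u_{ij}(m)\ge t]$, $i\le n$, $j\le c$. Each cluster $C_j(t)$ is the set of $x_i$ with indicator $1$, and the edge set is determined by which $C_j(t)\cap C_k(t)$ are nonempty. So it suffices to find $r^*>0$ such that none of these $nc$ indicators changes on $(m_0-r^*,m_0+r^*)$.

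\textbf{Step 1: a margin at $m_0$.} Define the margin
\[
\delta=\min_{i,j}\bigl|u_{ij}(m_0)-t\bigr|.
\]
This is a minimum over finitely many strictly positive numbers, because $u_{ij}(m_0)\neq t$ by hypothesis. Hence $\delta>0$.

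\textbf{Step 2: continuity near $m_0$.} By Proposition~\ref{prop:regularity}(ii), applied on the neighbourhood $\mathcal I$ of $m_0$ where its hypotheses hold, each $u_{ij}$ is $C^1$ and in particular continuous at $m_0$. The positivity of the distances is guaranteed at $m_0$ by \eqref{eq:nondeg} and propagates to a neighbourhood by continuity of $d_{il}$. So for each pair $(i,j)$ there is $r_{ij}>0$, with $(m_0-r_{ij},m_0+r_{ij})\subset\mathcal I$, such that $|u_{ij}(m)-u_{ij}(m_0)|<\delta$ whenever $|m-m_0|<r_{ij}$. Set $r^*=\min_{i,j}r_{ij}>0$; this is again a finite minimum.

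\textbf{Step 3: no threshold crossings.} For $|m-m_0|<r^*$, suppose $u_{ij}(m_0)>t$. Then $u_{ij}(m_0)\ge t+\delta$, so $u_{ij}(m)>u_{ij}(m_0)-\delta\ge t$. Symmetrically, if $u_{ij}(m_0)<t$ then $u_{ij}(m)<t$. Hence $C_j(t)$ computed at $m$ equals $C_j(t)$ computed at $m_0$ for every $j$. Consequently every intersection $C_j\cap C_k$ agrees, so $E(m)=E(m_0)$, and since the vertex set $\{1,\dots,c\}$ is fixed, $G_t(m)=G_t(m_0)$.

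\textbf{Optional explicit radius.} To make $r^*$ computable, as the introduction promises, I would use the $C^1$ regularity quantitatively. Let $L=\max_{i,j}\sup_{|m-m_0|\le\rho}|u_{ij}'(m)|$ on some compact interval $[m_0-\rho,m_0+\rho]\subset\mathcal I$; $L$ is bounded via the decomposition $T_1+T_2$ of Proposition~\ref{prop:regularity}(iv). The mean value theorem then gives the admissible choice $r^*=\min\{\rho,\delta/L\}$ (interpreted as $\rho$ if $L=0$).

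\textbf{Main obstacle.} The only delicate point is not the topology but ensuring that the hypotheses of Proposition~\ref{prop:regularity} genuinely hold on an open interval around $m_0$. This requires the optimisation path $V(m)$ (and $A_j(m)$) to be $C^1$ there, as in Assumption~\ref{ass:H2}, rather than merely defined pointwise by an iterative algorithm. I would take this as given from the statement's hypothesis, and note that the finiteness of the index set $\{1,\dots,n\}\times\{1,\dots,c\}$ is what makes both minima positive.
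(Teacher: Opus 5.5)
Your proof is correct and follows essentially the same route as the paper. Both arguments take positive threshold margins at $m_0$, use the continuity of each $u_{ij}$ from Proposition~\ref{prop:regularity}, take a finite minimum of radii, and conclude that the indicators $\mathbf{1}[u_{ij}(m)\ge t]$, and hence the clusters and edges, are unchanged. The differences are cosmetic: you use a single uniform margin $\delta=\min_{i,j}|u_{ij}(m_0)-t|$ and a single derivative bound $L$, whereas the paper uses per-pair gaps $|u_{ij}(m_0)-t|$ and per-pair bounds $M_{ij}$, which gives a slightly less conservative computable radius in Remark~\ref{rem:computable_radius}.
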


\begin{proof}
By Proposition~\ref{prop:regularity}, each membership function
$u_{ij}(m)$ is continuous, indeed $C^1$, in a neighbourhood of
$m_0$. Since $u_{ij}(m_0)\neq t$, define the positive
threshold margin
\[
    d_{ij}^{\mathrm{gap}}
    :=
    |u_{ij}(m_0)-t|>0.
\]
By continuity of $u_{ij}$ at $m_0$, there exists $r_{ij}>0$
such that
\[
    |u_{ij}(m)-u_{ij}(m_0)|<d_{ij}^{\mathrm{gap}}
    \qquad
    \text{whenever } |m-m_0|<r_{ij}.
\]
Since there are only finitely many pairs $(i,j)$, define
\[
    r^*
    :=
    \min_{1\leq i\leq n,\;1\leq j\leq c}r_{ij}.
\]
Then $r^*>0$. Hence, for every $m$ satisfying
$|m-m_0|<r^*$, we have
\[
    |u_{ij}(m)-u_{ij}(m_0)|
    <
    |u_{ij}(m_0)-t|
    \qquad
    \text{for all } i,j.
\]
Therefore $u_{ij}(m)$ and $u_{ij}(m_0)$ lie on the same side of
the threshold $t$. Consequently,
\[
    \mathbf 1[u_{ij}(m)\geq t]
    =
    \mathbf 1[u_{ij}(m_0)\geq t]
    \qquad
    \text{for all } i,j.
\]
Thus the thresholded cluster sets
$C_j(t,m)=\{x_i:u_{ij}(m)\geq t\}$
remain unchanged for all $j$. Since the edge set of
$G_t(m)$ is determined by the nonempty intersections
$C_a(t,m)\cap C_b(t,m)\neq \varnothing$,
the edge set also remains unchanged. Therefore
$G_t(m)=G_t(m_0)$ whenever $|m-m_0|<r^*$.
\end{proof}

\begin{remark}
\label{rem:computable_radius}
The proof above establishes the existence of a local stability radius.
A conservative computable estimate can be obtained from the
derivative formula in Proposition~\ref{prop:regularity}. Let
$J=[m_0-\mathrm{tol},m_0+\mathrm{tol}]$ be a compact neighbourhood
contained in the interval of regularity, and set
$M_{ij}(\mathrm{tol}):=\sup_{s\in J}|u'_{ij}(s)|$.
By Proposition~\ref{prop:regularity}, $u'_{ij}$ is continuous,
so $M_{ij}(\mathrm{tol})<\infty$. The mean value theorem gives
$|u_{ij}(m)-u_{ij}(m_0)|\leq M_{ij}(\mathrm{tol})|m-m_0|$.
Hence, one may take
\[
    r^*
    =
    \min_{i,j}
    \left\{
    \mathrm{tol},\,
    \frac{|u_{ij}(m_0)-t|}{M_{ij}(\mathrm{tol})}
    \right\},
\]
with the convention that if $M_{ij}(\mathrm{tol})=0$, the
corresponding term is taken as $\mathrm{tol}$.
\end{remark}

\begin{remark}
Theorem~\ref{thm:stability} establishes that both GK-Mapper and SFCM
graphs are locally stable near any non-threshold fuzzifier value
$m_0$, with computable stability radii $r^{*,\mathrm{GK}}$
and $r^{*,\mathrm{SFCM}}$. Whether GK-Mapper or SFCM achieves a
larger stability radius depends on the underlying cluster geometry
through the distance ratios,
the rates of evolution of the shape matrices and many other factors. For complex datasets, the
empirical evidence in Section~\ref{sec:empirical_validation}
suggests that GK-Mapper can produce larger stability regions than
SFCM. 
\end{remark}

\subsection{The Instability Zone}
\label{sub:instability}

Having established local stability, we now quantify how many
edges can change when the fuzzifier is perturbed from $m$ to
$m+h$.
For each membership entry, define the threshold indicator
\begin{equation}
\operatorname{ind}_{ij}(m)=\mathbf{1}\!\left[u_{ij}(m)\geq t\right].
\end{equation}
For a pair of clusters $(a,b)$, define the witness count
\begin{equation}
I_{ab}(m)=
\sum_{i=1}^{n}
\operatorname{ind}_{ia}(m)\operatorname{ind}_{ib}(m).
\end{equation}
Thus, $I_{ab}(m)$ counts the number of data points simultaneously
belonging to the thresholded clusters $a$ and $b$. Hence, the
edge $(a,b)$ exists in $G_t(m)$ if and only if $I_{ab}(m)>0$.
This behaviour is illustrated in
Fig.~\ref{fig:theorem3_instability_zone}.

\begin{lemma}
\label{lem:instability}
An edge $(a,b)$ changes between $G_t(m)$ and
$G_t(m+h)$ if and only if
\[
I_{ab}(m)\,I_{ab}(m+h)=0
\quad\text{and}\quad
I_{ab}(m)+I_{ab}(m+h)>0.
\]
Consequently, $G_t(m)\neq G_t(m+h)$ if and only if
the above condition holds for at least one pair $(a,b)$.
\end{lemma}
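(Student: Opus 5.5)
The argument reduces the lemma to a statement about two nonnegative integers. By construction, the edge $(a,b)$ is present in $G_t(m)$ exactly when $C_a(t,m)\cap C_b(t,m)\neq\varnothing$. Since $x_i$ lies in this intersection if and only if $\operatorname{ind}_{ia}(m)\operatorname{ind}_{ib}(m)=1$, the edge is present if and only if $I_{ab}(m)>0$, as noted just before the statement. I will write $e_{ab}(m):=\mathbf 1[I_{ab}(m)>0]\in\{0,1\}$. Saying that the edge $(a,b)$ changes between $G_t(m)$ and $G_t(m+h)$ then means $e_{ab}(m)\neq e_{ab}(m+h)$.

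The core step is elementary. Set $p=I_{ab}(m)$ and $q=I_{ab}(m+h)$; both are nonnegative integers, being sums of products of indicators. The edge changes exactly when one of $p,q$ is zero and the other is positive. I will verify both directions.

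\emph{Forward.} If exactly one of $p,q$ vanishes, then $pq=0$. The other is positive, so $p+q>0$ by nonnegativity.

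\emph{Converse.} Suppose $pq=0$ and $p+q>0$. From $pq=0$, at least one of $p,q$ is zero. From $p+q>0$ and nonnegativity, they are not both zero. Hence exactly one is positive, so $e_{ab}(m)\neq e_{ab}(m+h)$.

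This settles the first assertion. The only place where care is needed is nonnegativity: $p+q>0$ rules out $p=q=0$ only because neither value can be negative. Nonnegativity is immediate from the definition of $I_{ab}$.

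For the consequence, both graphs have the same vertex set $V=\{1,\dots,c\}$. Hence $G_t(m)\neq G_t(m+h)$ if and only if their edge sets differ, that is, if and only if some pair $(a,b)$ with $a\neq b$ is an edge in one graph but not the other. By the first part, this is equivalent to the stated condition holding for at least one pair $(a,b)$.

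No analytic input is required, and in particular neither Proposition~\ref{prop:regularity} nor Theorem~\ref{thm:stability} is used. The lemma is a purely combinatorial restatement of the edge criterion, valid for both SFCM and GK-Mapper. I do not expect any real obstacle; the main thing to get right is the bookkeeping that identifies graph equality with edge-set equality over a fixed vertex set.
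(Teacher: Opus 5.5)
Your proof is correct and follows the paper's argument: edge presence is identified with $I_{ab}>0$, an edge change is reduced to exactly one of the two nonnegative witness counts vanishing, and a graph change is read as at least one edge changing. Your explicit use of nonnegativity and of the fixed vertex set $\{1,\dots,c\}$ just spells out what the paper's short proof leaves implicit.
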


\begin{proof}
The edge $(a,b)$ exists exactly when $I_{ab}(m)>0$. Therefore,
the edge changes between $m$ and $m+h$ precisely when one
of the two witness counts $I_{ab}(m)$ and $I_{ab}(m+h)$
is positive and the other is zero, which is equivalent to the
stated conditions. The graph changes if and only if at least one
edge changes.
\end{proof}

\begin{theorem}[Edge-Change Bound]
\label{thm:edge-bound}
Define the threshold-crossing set
\[
S_{h}
=
\left\{
(i,j):
\bigl(u_{ij}(m)-t\bigr)
\bigl(u_{ij}(m+h)-t\bigr)<0
\right\}.
\]
For each $i$, let
$K_i=|\{j:(i,j)\in S_{h}\}|$.
Then the number of edge changes satisfies
\begin{equation}
|E_{\mathrm{chg}}|
\leq
\sum_{i=1}^{n}
\left[
\binom{K_i}{2}+K_i(c-K_i)
\right]
\leq
(c-1)|S_{h}|.
\label{eq:edge-bound}
\end{equation}
\end{theorem}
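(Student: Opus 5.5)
The plan is a charging argument: every changed edge is attributed to a data point $x_i$ whose membership in at least one of the two endpoint clusters crosses the threshold. Lemma~\ref{lem:instability} supplies the starting point, and the count is then made point by point.

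\emph{Step 1 (localising a change).} Suppose the edge $(a,b)$ changes. By Lemma~\ref{lem:instability}, exactly one of $I_{ab}(m)$ and $I_{ab}(m+h)$ is positive. So there is an index $i$ for which the product $\operatorname{ind}_{ia}\operatorname{ind}_{ib}$ equals $1$ at one parameter value and $0$ at the other. In particular $\operatorname{ind}_{ia}$ or $\operatorname{ind}_{ib}$ differs between $m$ and $m+h$.

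I then want to conclude that $(i,a)\in S_h$ or $(i,b)\in S_h$. This is the step I expect to be the main obstacle, because $S_h$ is defined by a strict sign change. An indicator can flip without a strict sign change if $u_{ij}(m)=t$ or $u_{ij}(m+h)=t$. I would handle this by working under the implicit non-threshold condition $u_{ij}(m)\neq t\neq u_{ij}(m+h)$, in the spirit of Theorem~\ref{thm:stability}. Under that condition, $\operatorname{ind}_{ij}(m)\neq\operatorname{ind}_{ij}(m+h)$ is equivalent to $(u_{ij}(m)-t)(u_{ij}(m+h)-t)<0$. Equivalently, one could reinterpret $S_h$ as the set of indicator flips; the rest of the argument is unchanged.

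\emph{Step 2 (counting per point).} Fix $i$ and let $J_i=\{j:(i,j)\in S_h\}$, so $|J_i|=K_i$. By Step~1, every changed edge charged to $x_i$ is an unordered pair $\{a,b\}$ with at least one endpoint in $J_i$. Such pairs fall into two disjoint classes:
\begin{itemize}
\item[] pairs with both endpoints in $J_i$, of which there are $\binom{K_i}{2}$;
\item[] pairs with exactly one endpoint in $J_i$, of which there are $K_i(c-K_i)$.
\end{itemize}
Every changed edge has at least one witness $i$. Applying a union bound over $i$ therefore gives the first inequality in~\eqref{eq:edge-bound}.

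\emph{Step 3 (the second inequality).} Rewrite each summand as
\[
\binom{K}{2}+K(c-K)=K\Bigl(c-\tfrac{K+1}{2}\Bigr).
\]
If $K=0$, the summand is $0$. If $K\geq 1$, then $\tfrac{K+1}{2}\geq 1$, so the summand is at most $K(c-1)$. Summing over $i$ and using $\sum_i K_i=|S_h|$ gives the bound $(c-1)|S_h|$.
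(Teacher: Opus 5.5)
Your proposal is correct and follows the paper's proof essentially step for step: each changed edge $(a,b)$ is charged to a point $x_i$ whose indicator in $a$ or $b$ flips, the pairs meeting the crossing set at $x_i$ are counted as $\binom{K_i}{2}+K_i(c-K_i)$, these counts are summed over $i$, and each summand is bounded by $K_i(c-1)$. Your caveat about the strict inequality in $S_h$ is justified, and it is exactly the point the paper's proof passes over when it asserts that an indicator change forces $(i,a)\in S_h$ or $(i,b)\in S_h$: with $n=1$, $c=2$, $t=\tfrac12$, $u_{11}(m)=u_{12}(m)=\tfrac12$ and $u_{11}(m+h)=0.6$, $u_{12}(m+h)=0.4$, the single edge disappears while $S_h=\emptyset$, so the bound genuinely needs your non-threshold hypothesis at $m$ and $m+h$, or your reading of $S_h$ as the set of indicator flips.
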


\begin{proof}
An edge $(a,b)$ can change only if, for some data point $x_i$,
at least one of the indicators $\operatorname{ind}_{ia}$ or $\operatorname{ind}_{ib}$ changes
between $m$ and $m+h$. Such a change can occur only when
$(i,a)\in S_{h}$ or $(i,b)\in S_{h}$.

Fix a data point $x_i$ and suppose that $K_i$ of its membership
entries cross the threshold. The affected cluster pairs are of
two types. First, both indices may belong to the crossing set,
giving at most $\binom{K_i}{2}$ pairs. Second, exactly one index
may belong to the crossing set, giving at most $K_i(c-K_i)$
pairs. Hence the number of edge pairs affected by $x_i$ is at
most $\binom{K_i}{2}+K_i(c-K_i)$.
Summing over all data points gives the first inequality.
Since $\binom{K_i}{2}+K_i(c-K_i)\leq K_i(c-1)$, summing yields
$(c-1)\sum_i K_i = (c-1)|S_{h}|$.
\end{proof}

\begin{figure*}[!t]
    \centering
    \begin{subfigure}[t]{0.48\textwidth}
        \centering
        \includegraphics[width=\textwidth]{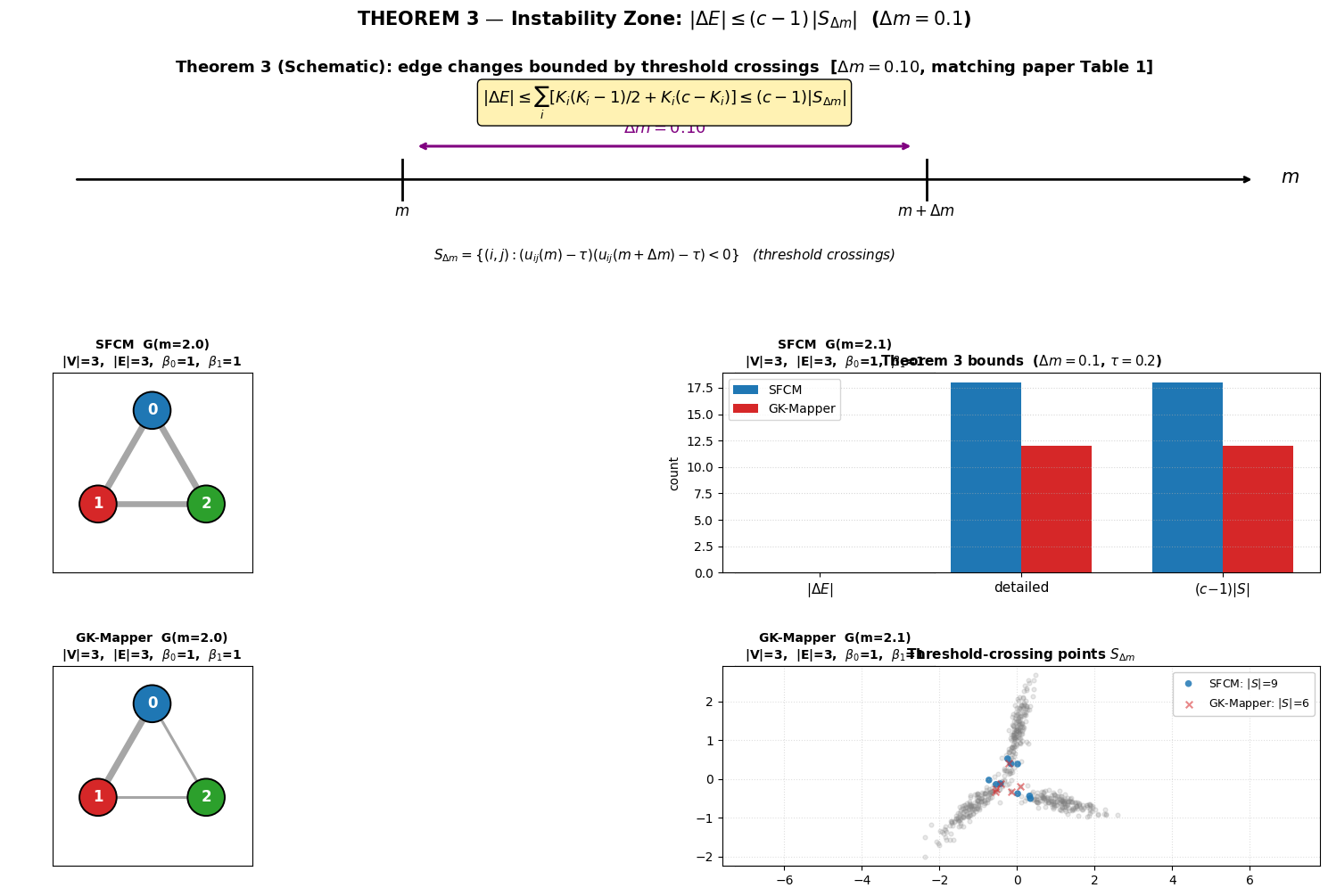}
        \caption{Visual illustration of
        Theorem~\ref{thm:edge-bound} for the anisotropic
        ellipsoidal dataset. Edge changes in the $t$-superlevel
        cluster co-occurrence graph are controlled by membership
        threshold crossings between $m=2.0$ and $m=2.2$. The
        SFCM and GK-Mapper graphs remain unchanged, confirming that
        the theorem gives a conservative upper bound on graph
        instability.}
        \label{fig:theorem3_instability_zone}
    \end{subfigure}
    \hfill
    \begin{subfigure}[t]{0.48\textwidth}
        \centering
        \includegraphics[width=\textwidth]{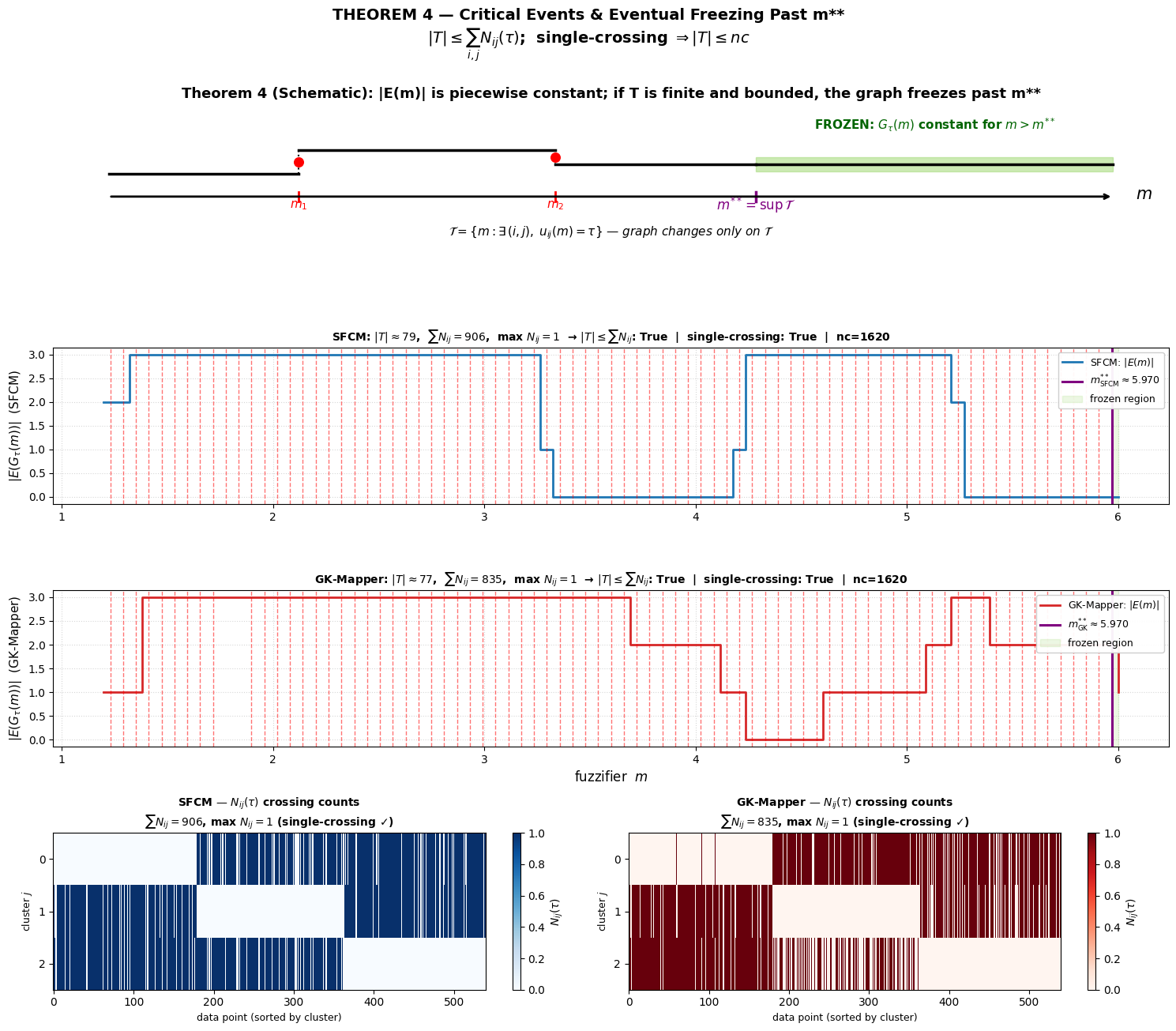}
        \caption{Visual illustration of
        Theorem~\ref{thm:freezing} for the $t$-superlevel
        cluster co-occurrence graph. Graph changes occur only at
        critical fuzzifier values satisfying $u_{ij}(m)=t$.
        The edge-count paths are piecewise constant, with the
        frozen regime appearing after $m^{\ast\ast}$.}
        \label{fig:theorem4_critical_events_freezing}
    \end{subfigure}
    \caption{Empirical illustrations of
    Theorems~\ref{thm:edge-bound} and~\ref{thm:freezing}.
    Subfigure~\ref{fig:theorem3_instability_zone} shows the
    conservative instability bound through membership threshold
    crossings, while
    Subfigure~\ref{fig:theorem4_critical_events_freezing} shows
    critical-event structure and eventual graph freezing.}
    \label{fig:theorem3_theorem4_combined}
\end{figure*}

\subsection{Critical Events and Eventual Freezing}
\label{sub:critical}

Having shown that edge changes are controlled by membership
threshold crossings, we now describe the critical-event structure
of the graph as the fuzzifier $m$ varies. Since the memberships
are evaluated along the optimisation path, they need not be
monotone in $m$. Therefore, a membership value may cross the
threshold $t$ more than once. We formulate the result in
terms of the actual threshold-crossing events.

Let $\mathcal I\subset(1,\infty)$ be the interval of fuzzifier
values under consideration. Define the critical-event set
\[
\mathcal T
=
\left\{
m\in\mathcal I:
\exists\,(i,j)\ \text{such that}\ u_{ij}(m)=t
\right\}.
\]
For each pair $(i,j)$, define the threshold-crossing count
\[
N_{ij}(t)
=
\#\left\{
m\in\mathcal I:u_{ij}(m)=t
\right\}.
\]
Thus, $N_{ij}(t)$ records the number of times the membership
of $x_i$ in cluster $j$ reaches the threshold $t$ on
$\mathcal I$.

\begin{theorem}[Critical Events and Eventual Freezing]
\label{thm:freezing}
Assume that each membership function $u_{ij}(m)$ is continuous
on $\mathcal I$. Then the following statements hold.
\begin{enumerate}[label=\upshape(\roman*)]
    \item $G_t(m)$ can change only at values
    $m\in\mathcal T$. Equivalently, $G_t(m)$ is constant on
    every connected component of $\mathcal I\setminus\mathcal T$.

    \item If $N_{ij}(t)<\infty$ for every pair $(i,j)$, then
    \[
        |\mathcal T|
        \leq
        \sum_{i=1}^{n}\sum_{j=1}^{c}N_{ij}(t).
    \]

    \item If $N_{ij}(t)\leq 1$ for all $(i,j)$, then
    $|\mathcal T|\leq nc$.

    \item If $\mathcal T$ is finite and bounded above in
    $\mathcal I$, and if $m^{**}:=\sup\mathcal T$,
    then $G_t(m)$ is constant on
    $\mathcal I\cap(m^{**},\infty)$.
\end{enumerate}
\end{theorem}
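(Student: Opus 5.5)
The plan is to reduce everything to the behaviour of the threshold indicators $\operatorname{ind}_{ij}(m)=\mathbf 1[u_{ij}(m)\ge t]$. The graph $G_t(m)$ is a function of the indicator matrix alone: each $C_j(t,m)$ is read off from the $j$-th column, and the edges are the pairs with $I_{ab}(m)>0$, as in Lemma~\ref{lem:instability}. So it suffices to show that each indicator is constant on every connected component of $\mathcal I\setminus\mathcal T$.

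For (i), fix a component $J$. It is an interval because $\mathcal I$ is an interval. For each $(i,j)$, the function $g_{ij}(m)=u_{ij}(m)-t$ is continuous on $J$ and never vanishes there, since every zero of $g_{ij}$ lies in $\mathcal T$. By the intermediate value theorem, $g_{ij}$ has constant sign on $J$, so $\operatorname{ind}_{ij}$ is constant on $J$. Hence $G_t(m)$ is constant on $J$. This is the same-side argument used in Theorem~\ref{thm:stability}, now globalised by connectedness.

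One point needs care: the phrase ``can change only at values $m\in\mathcal T$''. I would read it as follows. If $m_1<m_2$ and $G_t(m_1)\neq G_t(m_2)$, then $[m_1,m_2]$ meets $\mathcal T$, since otherwise the whole segment lies in a single component. Equivalently, every point of discontinuity of $m\mapsto G_t(m)$ lies in $\mathcal T$. I expect this interpretive step to be the only real obstacle. For instance, the graph at a point of $\mathcal T$ may agree with one side and not the other, because the indicator uses $\ge$. For that reason I would state the conclusion in the component form given in (i) and not attempt a pointwise claim at the points of $\mathcal T$ themselves.

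For (ii), note that $\mathcal T=\bigcup_{i,j}\{m\in\mathcal I:u_{ij}(m)=t\}$. This is a finite union of sets of cardinality $N_{ij}(t)$, so subadditivity of cardinality gives the bound. Part (iii) follows immediately by summing $N_{ij}(t)\le1$ over the $nc$ pairs.

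For (iv), take $m^{**}=\sup\mathcal T$, which is a maximum because $\mathcal T$ is finite. If $\mathcal T$ is empty, (i) already gives constancy on all of $\mathcal I$. Otherwise, $\mathcal I\cap(m^{**},\infty)$ is an interval disjoint from $\mathcal T$, and hence is contained in a single component of $\mathcal I\setminus\mathcal T$. Part (i) then gives that $G_t(m)$ is constant there.
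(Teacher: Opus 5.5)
Your proposal is correct and follows the paper's proof essentially step for step: continuity plus the intermediate value theorem make each indicator constant on every component for (i), subadditivity over the zero sets $\{u_{ij}=t\}$ gives (ii)--(iii), and (iv) is reduced to (i). You are slightly more careful than the paper, since you state explicitly that $\mathcal I\cap(m^{**},\infty)$ is connected and therefore lies in a single component, and you treat the case $\mathcal T=\varnothing$. One small correction: the components of $\mathcal I\setminus\mathcal T$ are intervals because connected subsets of $\mathbb R$ are intervals, not because $\mathcal I$ is an interval.
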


\begin{proof}
For each $i,j$, define the threshold indicator
$\operatorname{ind}_{ij}(m)=\mathbf 1[u_{ij}(m)\geq t]$, so that
$x_i\in C_j(t,m)\Longleftrightarrow\operatorname{ind}_{ij}(m)=1$.
An edge $(a,b)$ exists in $G_t(m)$ if and only if there
exists some $x_i$ such that $\operatorname{ind}_{ia}(m)\operatorname{ind}_{ib}(m)=1$.

\textup{(i)} Suppose $m_1$ and $m_2$ lie in the same connected
component of $\mathcal I\setminus\mathcal T$. Then
$u_{ij}(m)\neq$ throughout the interval between $m_1$ and
$m_2$. Since $u_{ij}$ is continuous, it cannot move from one
side of $t$ to the other without attaining the value $t$.
Hence $\operatorname{ind}_{ij}(m_1)=\operatorname{ind}_{ij}(m_2)$ for all $i,j$, so the
edge set of $G_t(m)$ is unchanged. Consequently, graph
changes can occur only at values in $\mathcal T$.

\textup{(ii)} For each fixed pair $(i,j)$, the equation
$u_{ij}(m)=t$ has exactly $N_{ij}(t)$ solutions on
$\mathcal I$. Hence
$|\mathcal T|\leq\sum_{i=1}^{n}\sum_{j=1}^{c}N_{ij}(t)$,
with inequality because several memberships may cross $t$ at
the same value of $m$.

\textup{(iii)} If $N_{ij}(t)\leq 1$ for all $(i,j)$, then
$\sum_{i,j}N_{ij}(t)\leq nc$, giving $|\mathcal T|\leq nc$.

\textup{(iv)} Since $\mathcal T$ is finite, no critical event
occurs in $\mathcal I\cap(m^{**},\infty)$. By part~(i), the
graph is constant on every connected component of
$\mathcal I\setminus\mathcal T$. Therefore $G_t(m)$ is
constant on $\mathcal I\cap(m^{**},\infty)$.
\end{proof}

\begin{remark}
\label{rem:freezing_interpretation}
Theorem~\ref{thm:freezing} does not assume that the memberships
are monotone in $m$. It only requires continuity. The bound in
part~(ii) uses the actual number of threshold-crossing events,
while part~(iii) gives the simpler estimate $|\mathcal T|\leq nc$
only when each membership reaches the threshold at most once on
$\mathcal I$. Thus, the eventual freezing point $m^{**}$ is
conditional on the critical-event set being finite and bounded
above.
\end{remark}

\section{Empirical Validation}
\label{sec:empirical_validation}

We evaluate GK-Mapper against SFCM on five datasets-Circle,
Anisotropic Ellipsoidal, Stanford Bunny, UCI Handwritten Digits,
and Wisconsin Breast Cancer to test the stability framework of
Section~\ref{sec:results}. The reported quantities are the
critical threshold $t_{\mathrm{crit}}$
(Theorem~\ref{thm:graveyard}; largest $t$ admitting at least
one edge), the empirical stability radius $r^*$
(Theorem~\ref{thm:stability}; local robustness in $m$), the
edge instability $|E_{\mathrm{chg}}|$ under perturbation $h$,
the Theorem~\ref{thm:edge-bound} detailed/simple bounds, and
standard clustering metrics (Silhouette, ARI, matching score).
All reference fuzzifiers $m_0$ are selected automatically from
the search grid. Aggregate results appear in
Table~\ref{tab:all_dataset_results}.

\subsection{Circle Dataset}
\label{subsec:circle}

A synthetic benchmark of 150 points on the unit circle in
$\mathbb{R}^2$ with Gaussian noise ($s=0.05$) and eight
angular sectors as ground-truth labels. Both methods use $c=8$,
$t=0.30$, with $m_0=4.457$ (SFCM) and $m_0=2.286$ (GK-Mapper).
GK-Mapper raises $t_{\mathrm{crit}}$ from $0.4016$ to $0.4652$
and increases $r^*$ from $0.0229$ to $0.0833$. Under $h=0.10$,
both methods yield $|E_{\mathrm{chg}}|=0$, indicating that the
graph structure is unchanged under the selected perturbation.

Clustering metrics show a mixed but favourable trend for GK-Mapper.
SFCM has a slightly higher Silhouette score ($0.495$ vs.\ $0.480$),
whereas GK-Mapper gives a substantially higher ARI ($0.814$ vs.\
$0.513$) and matching score ($0.913$ vs.\ $0.713$). Both methods
recover the expected circular topology, with $|V|=8$, $|E|=8$,
and $b_0=b_1=1$; see Fig.~\ref{fig:unit_circle_combined}. Thus,
on this dataset, GK-Mapper preserves the same topological structure
as SFCM while improving the empirical stability radius and external
label agreement.

\begin{figure*}[!t]
    \centering
    \begin{subfigure}[t]{0.48\textwidth}
        \centering
        \includegraphics[width=\textwidth]{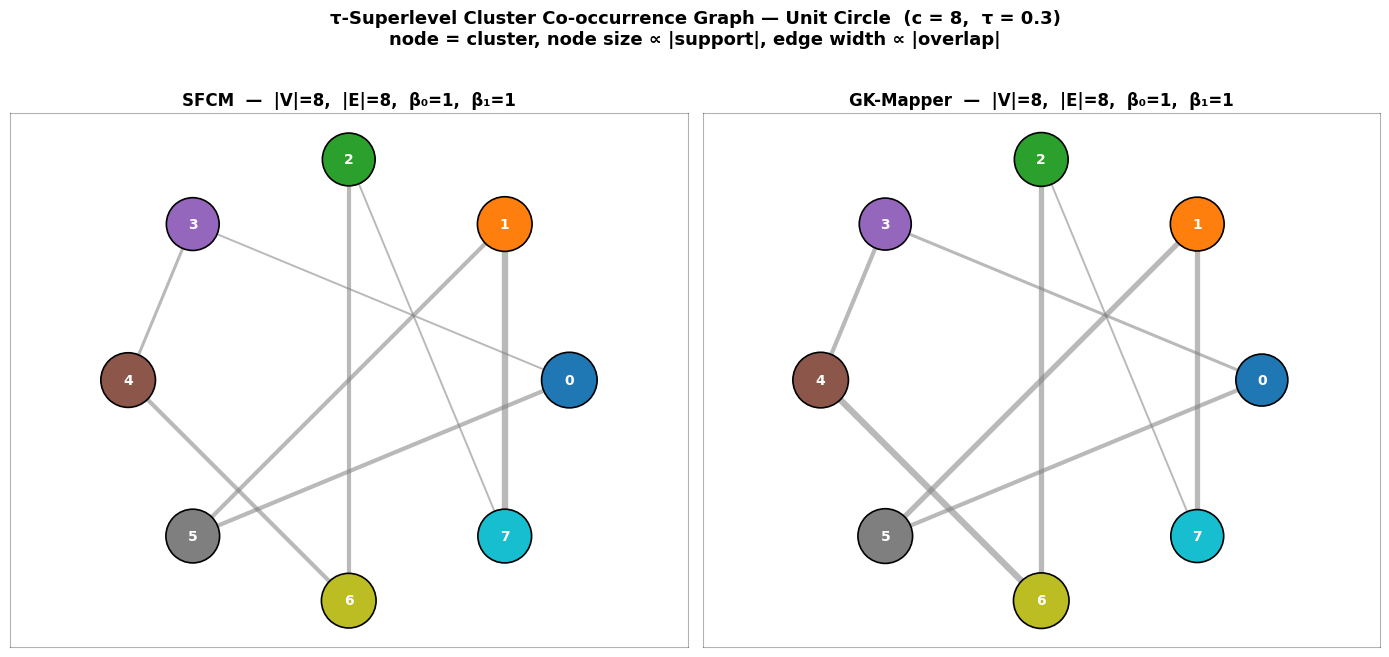}
        \caption{Co-occurrence graphs ($c=8$, $t=0.3$); both
        yield $|V|=|E|=8$, $b_0=b_1=1$.}
        \label{fig:unit_circle_mapper_graph}
    \end{subfigure}
    \hfill
    \begin{subfigure}[t]{0.48\textwidth}
        \centering
        \includegraphics[width=\textwidth]{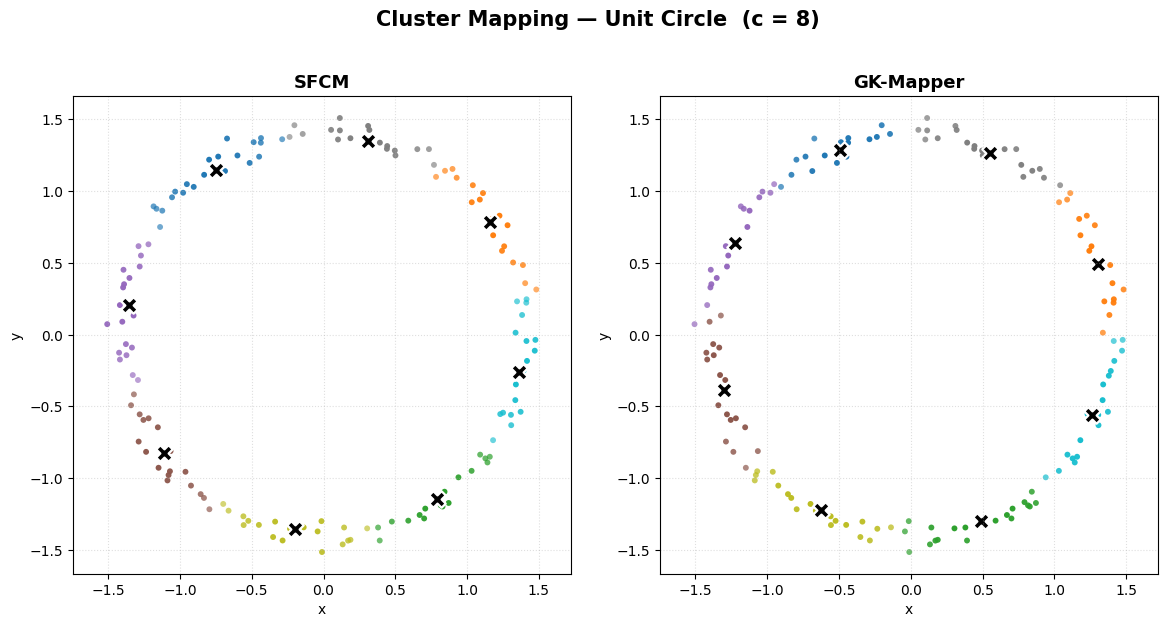}
        \caption{Cluster assignments. Both methods give comparable
        decompositions on this isotropic geometry.}
        \label{fig:unit_circle_cluster_mapping}
    \end{subfigure}
    \caption{Unit circle dataset: SFCM (left) and GK-Mapper (right).}
    \label{fig:unit_circle_combined}
\end{figure*}

\subsection{Anisotropic Ellipsoidal Dataset}
\label{subsec:anisotropic}

Three Gaussian clusters (180 points each) in $\mathbb{R}^2$
with aspect ratios $5{:}1$, $8{:}1$, $4{:}1$, deliberately
violating spherical-cluster assumptions~\cite{riani2015}.
Both methods use $c=3$, $t=0.20$, and $m_0=1.200$.
GK-Mapper improves $t_{\mathrm{crit}}$ from $0.3723$ to $0.4390$
and increases $r^*$ from $0.0132$ to $0.0333$. Under the selected
perturbation $h=0.10$, both methods give
$|E_{\mathrm{chg}}|=0$, so no edge change is observed.

Both methods produce the same graph-level topological summary:
$|V|=3$, $|E|=3$, $b_0=1$, and $b_1=1$
(Fig.~\ref{fig:ellipsoidal_combined}). The clustering metrics are
also very close. SFCM has a slightly higher Silhouette score
($0.684$ vs.\ $0.677$), while both methods obtain the same ARI
($0.962$) and matching score ($0.987$). Hence, on this elongated
dataset, GK-Mapper mainly improves the nontrivial threshold range
and the empirical stability radius while preserving essentially the
same clustering quality as SFCM.

\begin{figure*}[!t]
    \centering
    \begin{subfigure}[t]{0.48\textwidth}
        \centering
        \includegraphics[width=\textwidth]{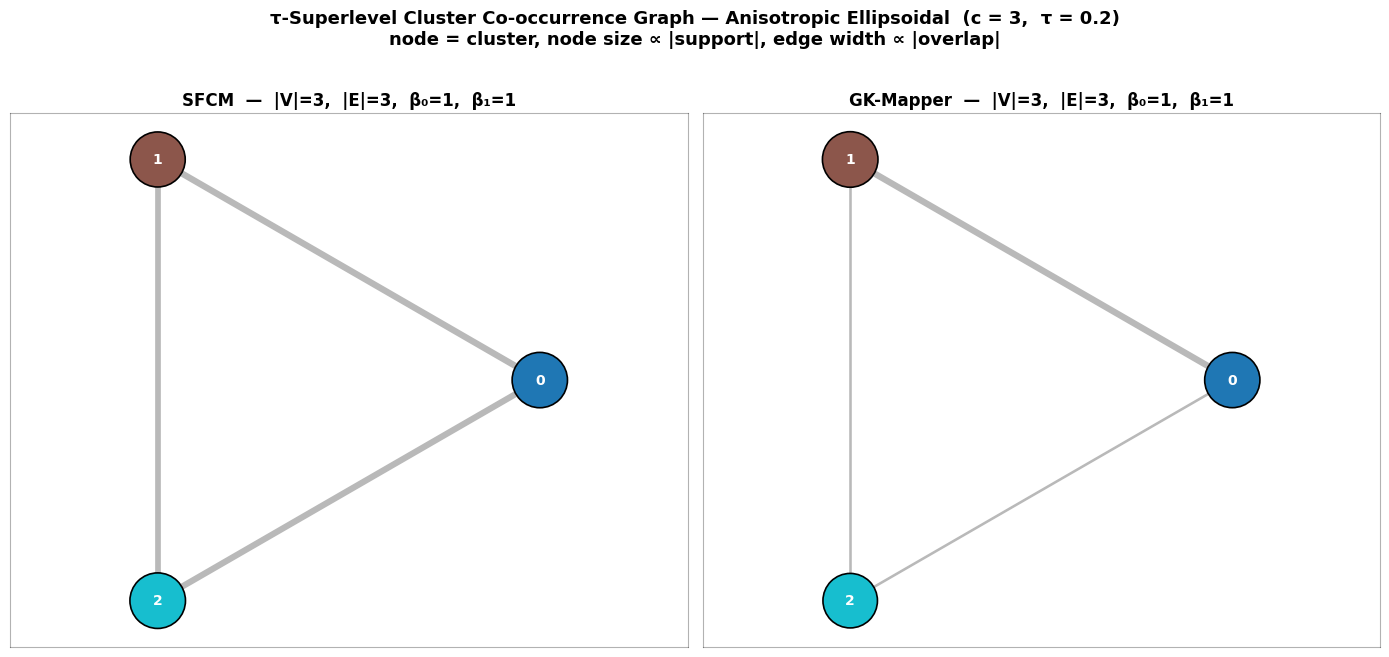}
        \caption{Co-occurrence graphs ($c=3$, $t=0.20$); both
        give $|V|=|E|=3$, $b_0=b_1=1$, but with different
        overlap strengths.}
        \label{fig:ellipsoidal_mapper_graph}
    \end{subfigure}
    \hfill
    \begin{subfigure}[t]{0.48\textwidth}
        \centering
        \includegraphics[width=\textwidth]{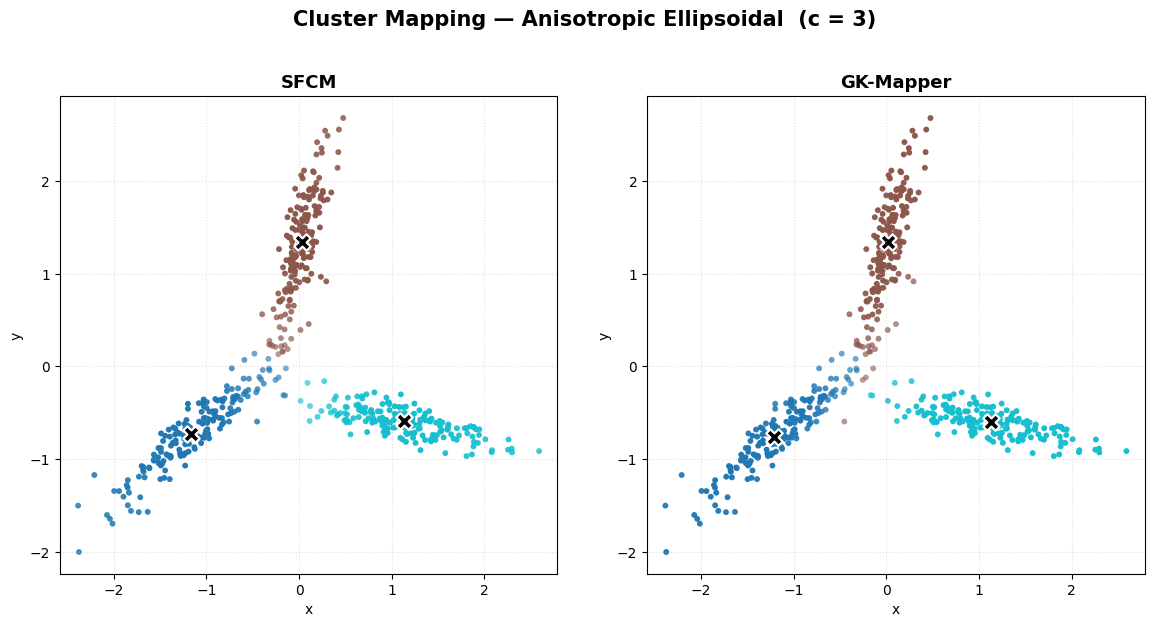}
        \caption{Cluster assignments. GK-Mapper adapts to elongated
        geometries via the Gustafson-Kessel metric.}
        \label{fig:ellipsoidal_cluster_mapping}
    \end{subfigure}
    \caption{Anisotropic ellipsoidal dataset.}
    \label{fig:ellipsoidal_combined}
\end{figure*}

\subsection{Stanford Bunny Dataset}
\label{subsec:bunny}

A 3D point cloud of 5000 points sampled from the Stanford Bunny
mesh~\cite{vanveen2019} (Open3D), centered, rotated, and $\ell_2$
normalized. Both methods use $c=8$, $t=0.25$, with
$m_0=5.000$ (SFCM) and $m_0=3.943$ (GK-Mapper). GK-Mapper raises
$t_{\mathrm{crit}}$ from $0.3762$ to $0.4511$ and slightly improves
$r^*$ from $0.0072$ to $0.0096$. Under $h=0.10$, SFCM shows one
edge change, whereas GK-Mapper shows no edge change.

The GK-Mapper graph is topologically richer than the SFCM graph.
SFCM gives $|V|=8$, $|E|=10$, $b_0=1$, and $b_1=3$, whereas
GK-Mapper gives $|V|=8$, $|E|=13$, $b_0=1$, and $b_1=6$; see
Fig.~\ref{fig:bunny_combined}. The Silhouette score is higher for
SFCM ($0.346$ vs.\ $0.307$), while ARI and matching score are not
available because the point cloud has no class labels. These results
suggest that GK-Mapper retains a more connected and cycle-rich graph
on the Bunny point cloud while also reducing the observed edge
variation under the selected perturbation.

\begin{figure*}[!t]
    \centering
    \begin{subfigure}[t]{0.48\textwidth}
        \centering
        \includegraphics[width=\textwidth]{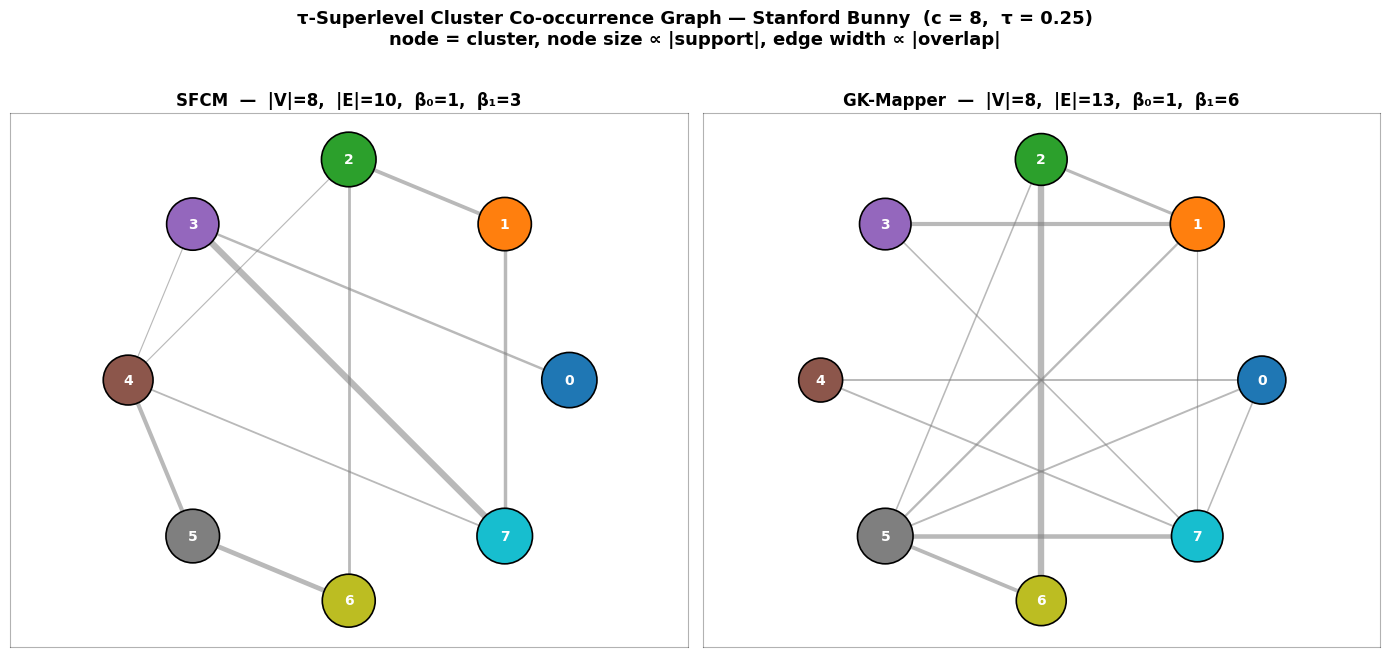}
        \caption{Co-occurrence graphs ($c=8$, $t=0.25$).
        SFCM: $|E|=10$, $b_1=3$;
        GK-Mapper: $|E|=13$, $b_1=6$.}
        \label{fig:bunny_mapper_graph}
    \end{subfigure}
    \hfill
    \begin{subfigure}[t]{0.48\textwidth}
        \centering
        \includegraphics[width=\textwidth]{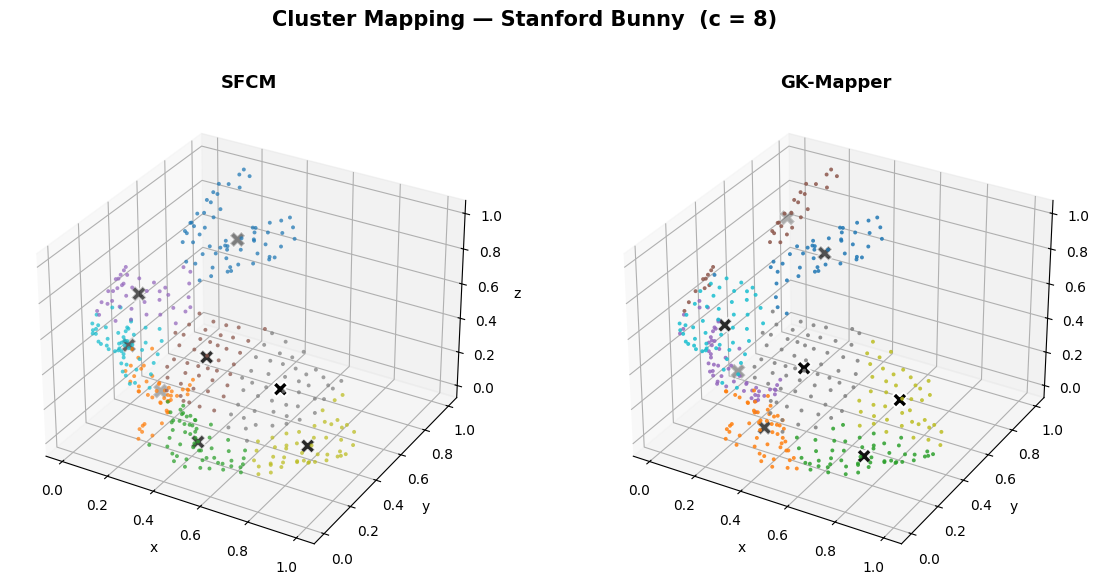}
        \caption{Cluster assignments on the 3D point cloud.}
        \label{fig:bunny_cluster_mapping}
    \end{subfigure}
    \caption{Stanford Bunny dataset.}
    \label{fig:bunny_combined}
\end{figure*}

\subsection{UCI Handwritten Digits Dataset}
\label{subsec:digits}

1797 grayscale $8\times 8$ digit images~\cite{Dua2019,Pedregosa2011},
reduced to 20 dimensions via PCA ($\approx\!95\%$ variance) and
standardized. Both methods use $c=10$, $t=0.12$, with $m_0=1.40$
(SFCM) and $m_0=2.400$ (GK-Mapper). GK-Mapper markedly raises
$t_{\mathrm{crit}}$ from $0.1000$ to $0.4386$. Since $t=0.12$
lies above the SFCM critical threshold, the SFCM graph becomes
edgeless, with $|E|=0$, $b_0=10$, and $b_1=0$. In contrast,
GK-Mapper retains a nontrivial graph with $|E|=23$, $b_0=2$,
and $b_1=15$.

The empirical stability radius is slightly larger for SFCM
($0.0004$ vs.\ $0.0003$), and SFCM shows no edge changes under
$h=0.10$, while GK-Mapper gives $|E_{\mathrm{chg}}|=11$. This
comparison must be interpreted carefully because the SFCM graph is
already edgeless at the selected threshold; therefore, the absence
of edge changes does not represent preservation of a meaningful
overlap structure. In terms of clustering metrics, GK-Mapper performs
better: the Silhouette score improves from $-0.054$ to $0.036$,
the ARI improves from $0.143$ to $0.257$, and the matching score
improves from $0.234$ to $0.439$. Thus, for UCI Digits, GK-Mapper
produces a nontrivial graph and better label agreement, although with
a slightly smaller empirical stability radius and more observed edge
changes.

\begin{figure*}[!t]
    \centering
    \begin{subfigure}[t]{0.48\textwidth}
        \centering
        \includegraphics[width=\textwidth]{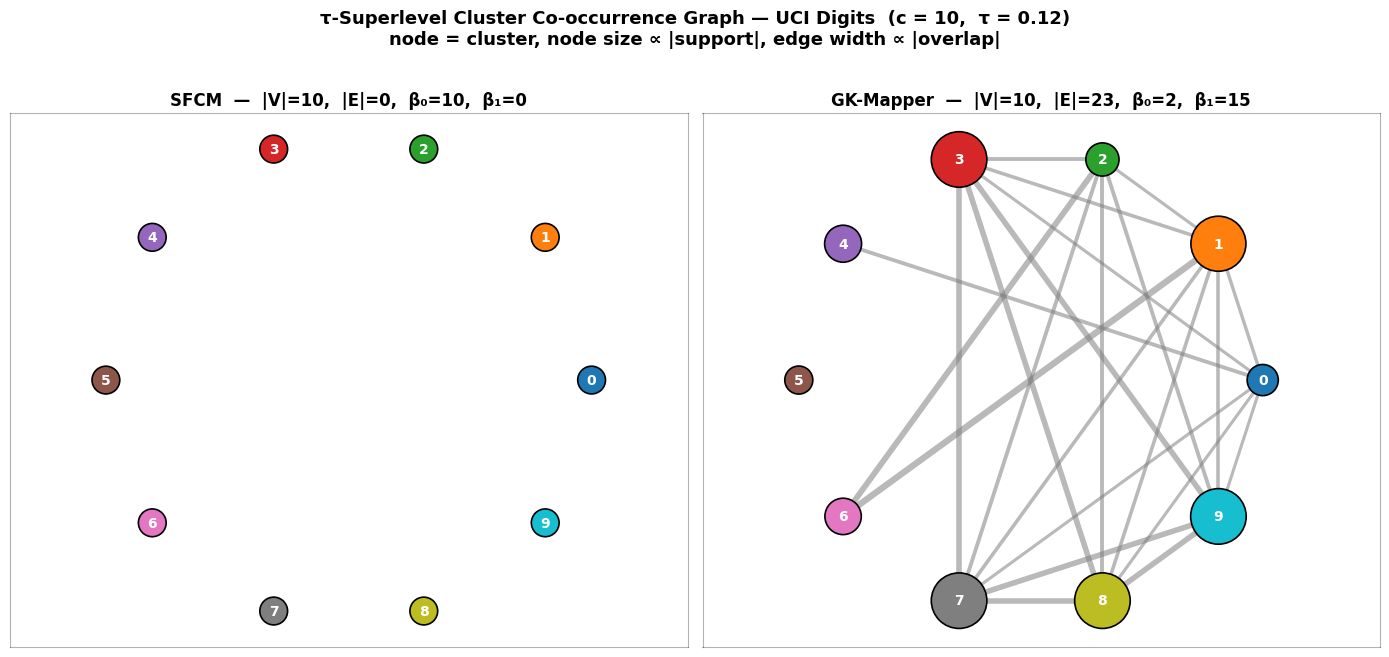}
        \caption{Co-occurrence graphs ($c=10$, $t=0.12$).
        SFCM is edgeless; GK-Mapper gives $|E|=23$, $b_1=15$.}
        \label{fig:uci_digits_mapper_graph}
    \end{subfigure}
    \hfill
    \begin{subfigure}[t]{0.48\textwidth}
        \centering
        \includegraphics[width=\textwidth]{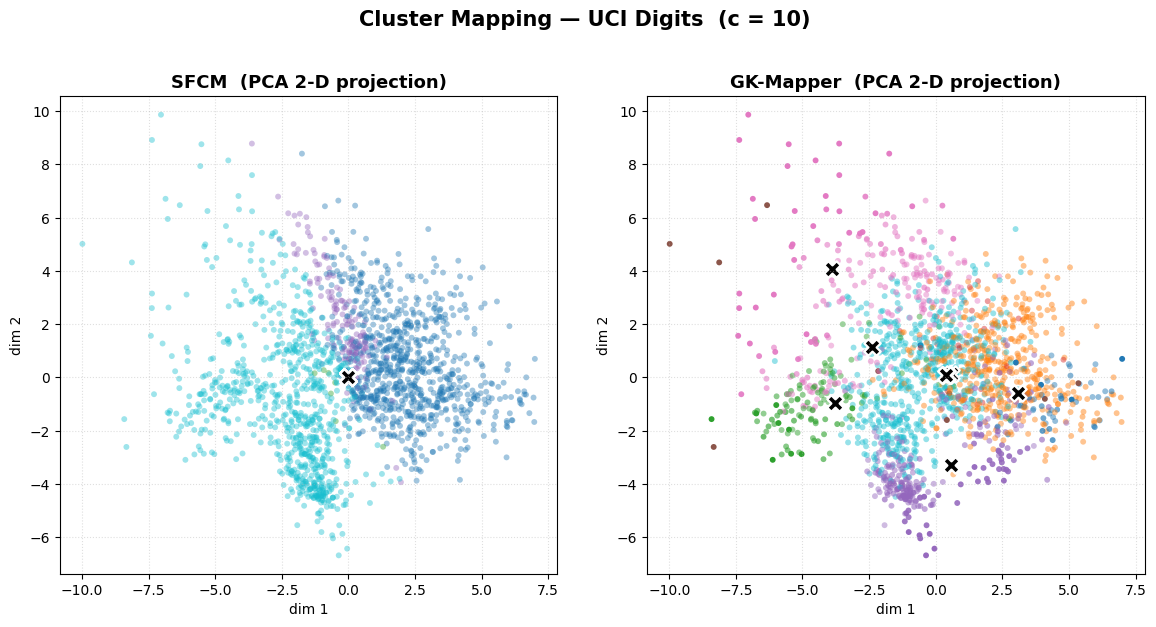}
        \caption{Cluster assignments (2D PCA projection).}
        \label{fig:uci_digits_cluster_mapping}
    \end{subfigure}
    \caption{UCI Digits dataset.}
    \label{fig:uci_digits_combined}
\end{figure*}

\subsection{Wisconsin Breast Cancer Dataset}
\label{subsec:breast_cancer}

569 samples with 30 features describing cell nuclei from digitised
fine-needle aspirates~\cite{Wolberg1995,Dua2019,Pedregosa2011};
binary malignant/benign labels. To probe the high-resolution regime
($1/c\!\approx\!0.01$), we use $c=100$, $t=0.015$, $h=0.10$,
with $m_0=1.65$ (SFCM) and $m_0=2.40$ (GK-Mapper). GK-Mapper
substantially increases $t_{\mathrm{crit}}$ from $0.0291$ to
$0.2478$ and improves the empirical stability radius from $0.0011$
to $0.1596$. It also sharply reduces the number of edge changes
from $879$ to $11$.

The graph structures are very different. SFCM produces a dense graph
with $|E|=1734$, $b_0=2$, and $b_1=1636$, whereas GK-Mapper produces
a much sparser graph with $|E|=21$, $b_0=94$, and $b_1=15$; see
Fig.~\ref{fig:breast_cancer_combined}. This indicates that the
adaptive Gustafson Kessel metric strongly reduces excessive overlap
in this high-dimensional biomedical dataset. In terms of clustering
metrics, GK-Mapper improves the Silhouette score from $-0.203$ to
$0.019$ and the matching score from $0.455$ to $0.504$, while SFCM
obtains a higher ARI ($0.256$ vs.\ $0.143$). Thus, GK-Mapper gives
a more stable and much sparser graph, although SFCM aligns better
with the binary labels under ARI.

\begin{figure*}[!t]
    \centering
    \begin{subfigure}[t]{0.48\textwidth}
        \centering
        \includegraphics[width=\textwidth]{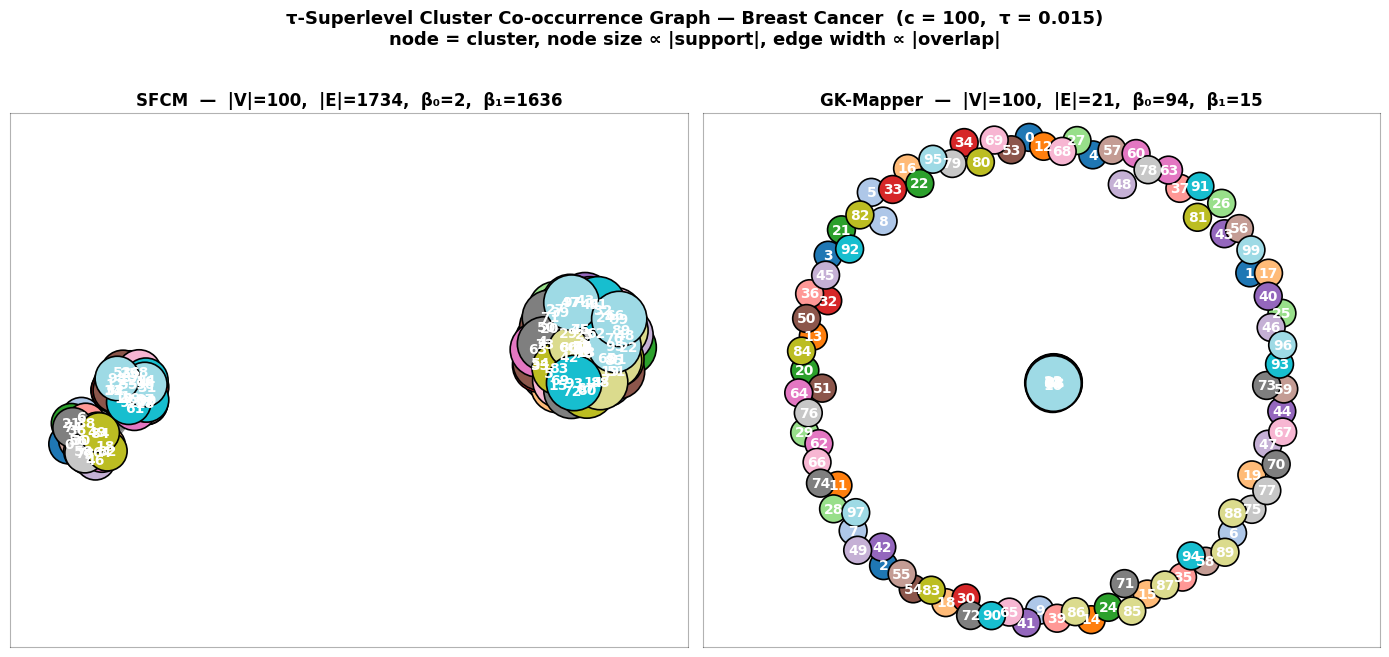}
        \caption{Co-occurrence graphs ($c=100$, $t=0.015$).
        SFCM: $|E|=1734$, $b_1=1636$;
        GK-Mapper: $|E|=21$, $b_1=15$.}
        \label{fig:breast_cancer_mapper_graph}
    \end{subfigure}
    \hfill
    \begin{subfigure}[t]{0.48\textwidth}
        \centering
        \includegraphics[width=\textwidth]{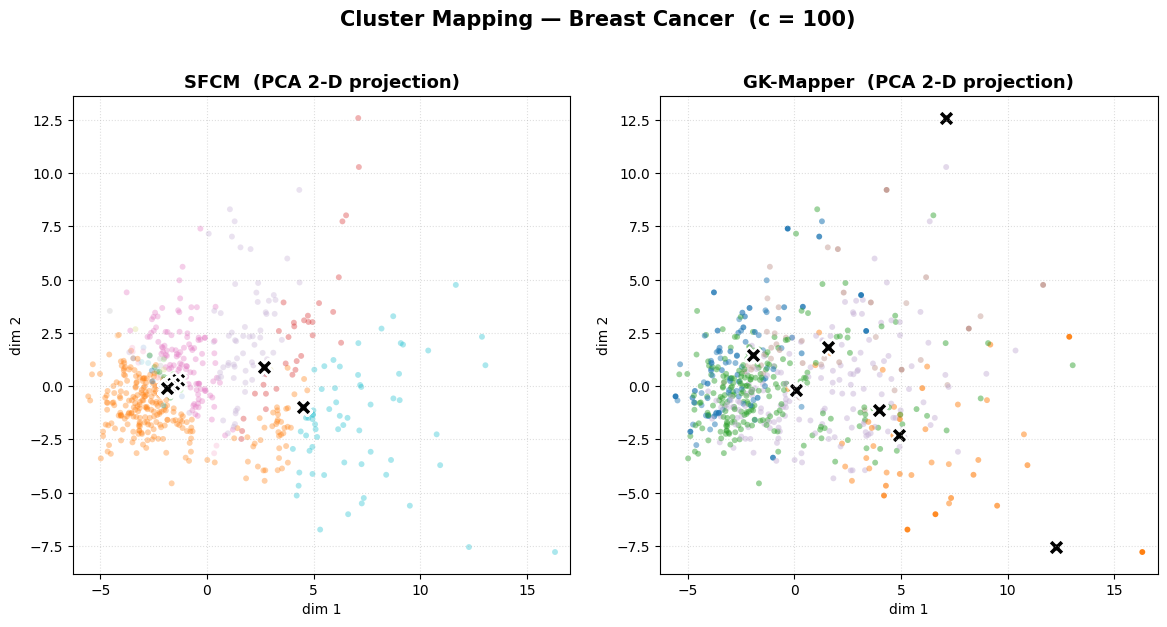}
        \caption{Cluster assignments (2D PCA projection).}
        \label{fig:breast_cancer_cluster_mapping}
    \end{subfigure}
    \caption{Wisconsin Breast Cancer dataset.}
    \label{fig:breast_cancer_combined}
\end{figure*}

\begin{table}[ht]
\centering
\caption{Summary of SFCM and GK-Mapper performance across datasets.
$r^*$: empirical stability radius; $|E_{\mathrm{chg}}|$: edge change for $h{=}0.10$.}
\label{tab:all_dataset_results}
\setlength{\tabcolsep}{3.5pt}
\renewcommand{\arraystretch}{1.05}
\footnotesize
\begin{tabular}{llccccccccccc}
\toprule
\textbf{Dataset} & \textbf{Method}
  & $t_{\mathrm{crit}}$
  & $r^{*}$
  & $m_0$
  & $|E_{\mathrm{chg}}|$
  & edges
  & $b_0$
  & $b_1$
  & Sil.
  & ARI
  & Match. \\
\midrule
\multirow{2}{*}{Breast Cancer}
  & SFCM      & 0.0291 & 0.0011 & 1.65   & 879 & 1734 & 2  & 1636 & $-$0.203 & 0.256 & 0.455 \\
  & GK-Mapper & 0.2478 & 0.1596 & 2.40   & 11  & 21   & 94 & 15   & 0.019    & 0.143 & 0.504 \\
\midrule
\multirow{2}{*}{UCI Digits}
  & SFCM      & 0.1000 & 0.0004 & 1.40   & 0   & 0    & 10 & 0    & $-$0.054 & 0.143 & 0.234 \\
  & GK-Mapper & 0.4386 & 0.0003 & 2.40   & 11  & 23   & 2  & 15   & 0.036    & 0.257 & 0.439 \\
\midrule
\multirow{2}{*}{\shortstack[l]{Anisotropic\\Ellipsoidal}}
  & SFCM      & 0.3723 & 0.0132 & 1.20   & 0   & 3    & 1  & 1    & 0.684    & 0.962 & 0.987 \\
  & GK-Mapper & 0.4390 & 0.0333 & 1.20   & 0   & 3    & 1  & 1    & 0.677    & 0.962 & 0.987 \\
\midrule
\multirow{2}{*}{Stanford Bunny}
  & SFCM      & 0.3762 & 0.0072 & 5.000  & 1   & 10   & 1  & 3    & 0.346    & NaN   & NaN   \\
  & GK-Mapper & 0.4511 & 0.0096 & 3.943  & 0   & 13   & 1  & 6    & 0.307    & NaN   & NaN   \\
\midrule
\multirow{2}{*}{Unit Circle}
  & SFCM      & 0.4016 & 0.0229 & 4.457  & 0   & 8    & 1  & 1    & 0.495    & 0.513 & 0.713 \\
  & GK-Mapper & 0.4652 & 0.0833 & 2.286  & 0   & 8    & 1  & 1    & 0.480    & 0.814 & 0.913 \\
\bottomrule

\end{tabular}
\end{table}

\section{Discussion}
\label{sec:discussion}

The empirical validation across five datasets shows that GK-Mapper
consistently increases the critical threshold $t_{\mathrm{crit}}$ compared
with SFCM. The increase is observed on the Unit Circle dataset
($0.4016$ to $0.4652$), the Anisotropic Ellipsoidal dataset
($0.3723$ to $0.4390$), the Stanford Bunny dataset
($0.3762$ to $0.4511$), the UCI Digits dataset
($0.1000$ to $0.4386$), and the Breast Cancer dataset
($0.0291$ to $0.2478$). This indicates that GK Mapper allows the graph to remain
nontrivial over a wider range of threshold values. In Theorem~\ref{thm:graveyard}, a threshold satisfying
$t>t_{\mathrm{crit}}$ makes the graph edgless and discrete.
Thus, finding $t_{\mathrm{crit}}$
is an essential first step before interpreting the resulting Mapper graph.

GK-Mapper also produces a larger empirical stability radius in four
of the five datasets. The improvement is most visible for the Unit
Circle dataset ($0.0833$ vs.\ $0.0229$), the Anisotropic Ellipsoidal
dataset ($0.0333$ vs.\ $0.0132$), the Stanford Bunny dataset
($0.0096$ vs.\ $0.0072$), and the Breast Cancer dataset
($0.1596$ vs.\ $0.0011$). The only exception is the UCI Digits
dataset, where SFCM has a slightly larger radius
($0.0004$ vs.\ $0.0003$). This supports the local stability result
of Theorem~\ref{thm:stability}: when the membership values remain
separated from the threshold $t$, small changes in the fuzzifier
$m$ do not alter the thresholded cover, and hence the graph remains
unchanged. The empirical results therefore suggest that GK-Mapper
often provides a wider local stability region, especially when the
data is heterogeneous.

The edge-change results shows that GK-Mapper reduces the number of edge changes on the Breast Cancer
dataset, where $|E_{\mathrm{chg}}|$ drops from $879$ for SFCM to $11$
for GK-Mapper. It also improves stability on the Stanford Bunny
dataset, where the edge change decreases from $1$ to $0$. On the
Anisotropic Ellipsoidal and Unit Circle datasets, both methods show
no edge changes under the selected perturbation. However, on the UCI
Digits dataset, GK-Mapper has $11$ edge changes whereas SFCM has none.
This case should be interpreted carefully, because the SFCM graph is
already edgeless at the chosen threshold, while GK-Mapper retains a
nontrivial graph with $23$ edges. Therefore, the absence of edge changes
for SFCM in this case reflects an already-empty graph rather than a
more informative stable structure. This agrees with
Theorem~\ref{thm:edge-bound}, which states that edge changes are
controlled by threshold crossings of membership values.

The clustering metrics show that graph stability and label agreement
are related but distinct objectives. GK-Mapper improves the ARI and
matching score on the Unit Circle and UCI Digits datasets. For example,
on UCI Digits, the ARI increases from $0.143$ to $0.257$, and the
matching score increases from $0.234$ to $0.439$. On the Unit Circle
dataset, the ARI increases from $0.513$ to $0.814$, and the matching
score increases from $0.713$ to $0.913$. On the Anisotropic Ellipsoidal
dataset, both methods obtain the same ARI and matching score, while
SFCM has a slightly higher silhouette value. On the Breast Cancer
dataset, GK-Mapper improves the silhouette and matching score, but
SFCM obtains a higher ARI. Therefore, a more stable or more structured
Mapper graph does not automatically imply stronger agreement with
external class labels.

Several limitations remain. The theoretical results assume local
regularity of the optimisation path but do not prove global uniqueness
or global smoothness of FCM or GK-FCM solutions. The stability guarantee
is local in the fuzzifier parameter, and the theory does not establish
a universal ordering between the stability radius of GK-Mapper and that
of SFCM. The empirical behaviour also depends on the chosen threshold,
number of clusters, initialisation, fuzzifier grid, perturbation size,
and scatter-matrix regularisation. Therefore, GK-Mapper should not be
viewed as uniformly superior to SFCM; rather, it provides a geometry-adaptive
alternative that can be more stable and more informative when the data
contain anisotropic or heterogeneous structures.

In summary, the proposed framework separates three issues in fuzzy
Mapper construction: whether the graph is nontrivial, whether it remains
locally stable under perturbations of the fuzzifier, and whether the
resulting graph agrees with external labels. The experiments indicate
that GK-Mapper usually increases the nontrivial threshold range and often
improves local stability, particularly on geometrically complex datasets.
At the same time, the comparison with SFCM remains data-dependent, so the
choice between the two methods should be guided by the geometry of the
dataset and the goal of the analysis.
\section{Conclusion}
\label{sec:conclusion}

We introduced the Gustafson Kessel Mapper (GK-Mapper)
algorithm, that generalises SFCM~\cite{bui2021sfcm} by
replacing its Euclidean cover with an Ellipsoidal cover from the Gustafson-Kessel FCM
algorithm~\cite{gustafson1979,bezdek1981}.

We then developed a
stability framework that (i) identifies the edgeless zone 
boundary $t_{\mathrm{crit}}(m)=\max_i\max_{j\neq k}
\min\{u_{ij}(m),u_{ik}(m)\}$, (ii) establishes local graph
stability near fuzzifier values, and
(iii) bounds edge changes by membership threshold crossings.
We also showed that under a single crossing condition, the critical event set
satisfies $|\mathcal{T}|\le nc$, and when $\mathcal{T}$ is
finite the graph freezes beyond $m^{**}=\sup\mathcal{T}$.

To validate these theorems, we did experiments on the Circle, Anisotropic Ellipsoidal, Stanford
Bunny, UCI Digits, and Wisconsin Breast Cancer datasets. In all these cases, GK-Mapper has yielded more stable regions, while clustering quality remains dataset-dependent. This confirms that GK-Mapper is more effective in terms of graph stability.

Future work includes (i) characterising single-crossing behaviour
along the full GK-FCM path, (ii) extending the construction to
fuzzy $c$-varieties~\cite[Section~23]{bezdek1981},
(iii) integrating persistent homology to track the filtration
induced by varying
$m$~\cite{cohen2007,edelsbrunner2000,ghrist2008}, and
(iv) deriving Lipschitz-type bounds linking membership-space
perturbations to topological
distances~\cite{lesnick2015,chazal2016}.

\section*{Author contributions}\label{sec7}
Annesha Sen and Shivam Singh contributed to the conceptualization of the problem and wrote the main manuscript text. S.P. Tiwari supervised the work and helped in the preparation of manuscript.

\section*{Declarations}\label{sec7}

{\bf Competing interests}: The authors confirm that they have no competing interests.


\end{document}